\theoremstyle{plain}
\newtheorem{thm}{Theorem}[section]
\newtheorem{claim}[thm]{Claim}
\newtheorem{lemma}[thm]{Lemma}
\newtheorem{remark}[thm]{Remark}
\newtheorem{theorem}[thm]{Theorem}
\newtheorem{ex}[thm]{Example}
\newtheorem{proposition}[thm]{Proposition}
\newtheorem{defn}[thm]{Definition}
\numberwithin{equation}{section}
\newcommand{\N}{\mathbb{N}}
\newcommand{\R}{\mathbb{R}}
\newtheorem{cor}{Corollary}[section]
\begin{document}

\title[Nonsmooth Morse-Sard theorems]{Nonsmooth Morse-Sard theorems}

\author{D. Azagra}
\address{ICMAT (CSIC-UAM-UC3-UCM), Departamento de An{\'a}lisis Matem{\'a}tico,
Facultad Ciencias Matem{\'a}ticas, Universidad Complutense, 28040, Madrid, Spain}
\email{azagra@mat.ucm.es}

\author{J. Ferrera}
\address{IMI, Departamento de An{\'a}lisis Matem{\'a}tico,
Facultad Ciencias Matem{\'a}ticas, Universidad Complutense, 28040, Madrid, Spain}
\email{ferrera@mat.ucm.es}

\author{J. G\'omez-Gil}
\address{Departamento de An{\'a}lisis Matem{\'a}tico,
Facultad Ciencias Matem{\'a}ticas, Universidad Complutense, 28040, Madrid, Spain}
\email{gomezgil@mat.ucm.es}

\date{\today\ (\the\time)}

\keywords{Morse-Sard theorem, Taylor polynomial, subdifferential, nonsmooth}

\thanks{The authors were partially supported by Grant MTM2012-34341 of Ministerio de Econom\'{i}a y Competitividad. D. Azagra was also partially  supported by ICMAT Severo Ochoa project SEV-2015-0554.}

\subjclass[2010]{}

\begin{abstract}
We prove that every function $f:\R^n\to \R$  satisfies that the image of the set of critical points at which the function $f$ has Taylor expansions of order $n-1$ and non-empty subdifferentials of order $n$ is a Lebesgue-null set. As a by-product of our proof, for the proximal subdifferential $\partial_{P}$, we see that for every lower semicontinuous function $f:\R^2\to\R$ the set $f(\{x\in\R^2 : 0\in\partial_{P}f(x)\})$ is $\mathcal{L}^{1}$-null. 
\end{abstract}

\maketitle

\section{Introduction and main results}

The main purpose of this paper is to provide nonsmooth versions of the Morse-Sard Theorem for real-valued functions defined on $\R^n$. Recall that the Morse-Sard theorem \cite{Morse, Sard} states that if $f:\R^n\to\R^m$ is of class $C^k$, where $k=n-m+1$, then the set of critical values of $f$ has measure zero in $\R^m$. A famous example of Whitney's \cite{Whitney} shows that this classical result is sharp within the classes of functions $C^j$. Nevertheless several generalizations of the Morse-Sard theorem for other classes of functions have appeared in the literature; see 
\cite{Alberti, Bates, BoHaSt, BourKoKris1, BourKoKris2, DePascale, Dubo, Figalli, KorobkovKristensen, HajlaszZimmerman, Norton, NortonZMS, Landis, PavZaj, Putten, Rifford, Yomdin} and the references therein. We cannot state all of the very interesting results of the rich litterature concerning this topic; instead, because of its pointwise character which is closely related to our results, let us only mention that Bates proved in \cite{Bates} that if $f\in C^{k-1,1}(\R^n,\R^m)$ (i.e., if $f\in C^{k-1}$ and $D^{k-1}f$ is Lipschitz) then the conclusion of the Morse-Sard theorem still holds true. In \cite{AFG} we gave an abstract version of the Morse-Sard theorem which allows us to recover a previous result of De Pascale's for the class of Sobolev functions \cite{DePascale}, as well as a refinement of Bates's theorem which only requires $f$ to be $k-1$ times continuously differentiable and to satisfy a Stepanoff condition of order $k$, namely that $$\limsup_{h\to 0}\frac{|f(x+h)-f(x)-Df(x)(h) - ... - \frac{1}{(k-1)!} D^{k-1}f(x)(h^{k-1})|}{|h|^k}<\infty$$ for every $x\in\R^n$. As a referee of the present paper pointed out, this result can also be easily proved, and even generalized, by using some ideas of the proof of \cite[Theorem 1]{LiuTai}; see the Appendix below.

In the present paper we will look at the case $m=1$ more closely, and we will study the question as to what extent one-sided Taylor expansions (that is, viscosity subdifferentials) of order $n$ are sufficient to ensure that a given function $f:\R^n\to\R$ has the Morse-Sard property. The results that we obtain generalize many of the previous versions of Morse-Sard Theorem and do not require
that the function $f$ be $C^{n-1}$ smooth (nor even two times differentiable). They are meant to complement the nonsmooth versions of the Morse-Sard theorem for subanalytic functions and for continuous selections of compactly indexed countable families of $C^{n}$ functions on $\R^n$ that were established in \cite{BolteDaniilidisLewis, BarbetDambrineDaniilidis}.

For an integer $n\geq 2$, we will say that
a function $f:\R^N\to\R^m$ has a Taylor expansion of order $n-1$ at $x$ provided there exist
$k$-homogeneous polynomials $P_x^k$, $k=1,\dots ,n-1$, such that
$$
\lim_{h\to 0}\frac{f(x+h)-f(x)-P_x^1(h)-P_x^2(h)-\dots -P_x^{n-1}(h)}{|h|^{n-1}}=0.
$$
If there exist such polynomials then they are unique. Also note that if a function $f$ has Taylor expansion of order $n-1$ at a point $x$, then it is
differentiable at $x$ and the differential $Df(x)$ equals the linear function $P_x^1$; however $D^{j}f(x)$ does not necessarily exist for $j\geq 2$. On the other hand, if $f$ is $n-1$ times differentiable at $x$ then $f$ has a Taylor expansion of order $n-1$ at $x$, and $P_x^k=\frac{1}{k!}D^kf(x)$ for every $k=1,\dots ,n-1$.
For more information on Taylor expansions and its relation with approximate differentiability and Lusin properties of higher order, see \cite{LiuTai, LinLiu}.

Let us now explain what we mean by a subdifferential of order $n$.
Probably, the most natural way to define a subdifferential $\tilde{\partial}^nf(x_0)$  of order $n$ of a lower semicontinuous function $f:\R^N\to\R$ at a point $x_0$ 
is as the set of $n$-tuples  $(P_1,\dots ,P_n)\in \mathcal{P} (^1\R^N)\times \dots \mathcal{P} (^n\R^N)$ such that
$$
\liminf_{x\to x_0}\frac{f(x)-f(x_0)-P_1(x-x_0)-\dots -P_n(x-x_0)}{|x-x_0|^n}\geq 0.
$$
Here $\mathcal{P} (^k\R^N)$ denotes the space of $k$-homogeneous polynomials on $\R^N$, which is endowed with the norm
$$
\|P\|=\sup_{|v|=1}|P(v)|.
$$
In the case $n=2$ this definition agrees with the standard viscosity subdifferential of order $2$; see \cite{CIL} and the references therein.
It is easy to see that if $(P_1,\dots ,P_n)\in \tilde{\partial}^nf(x_0)$ then  $(P_1,\dots ,P_{n-1})\in \tilde{\partial}^{n-1}f(x_0)$.
It is also clear that if the polynomial $\varphi (x)=f(x_0)+P_1(x-x_0)+\dots P_{n-1}(x-x_0)$ satisfies $\varphi \leq f$ on a neighbourhood
of $x_0$, then $(P_1,\dots ,P_{n-1})\in \tilde{\partial}^{n-1}f(x_0)$.
For $n$ odd, the converse is partially true, in the following sense:
if $(P_1,\dots ,P_{n-1})\in \tilde{\partial}^{n-1}f(x_0)$ and $\varepsilon >0$, then the polynomial
 $\varphi (x)=f(x_0)+P_1(x-x_0)+\dots P_{n-1}(x-x_0)-\varepsilon |x-x_0|^{n-1}$ is less than or equal to
$f$ on a neighbourhood of $x_0$ (this is not necessarily true if $n$ is even). Hence, we have the following.

\begin{proposition}\label{nonemty subd of order 2 implies nonempty subd for every n}
If $(\zeta ,P)\in \tilde{\partial} ^2f(x_0)$ and $P_{\varepsilon}(h)=P(h)-\varepsilon |h|^2$
then $(\zeta ,P_{\varepsilon},0,\dots ,0)\in \tilde{\partial} ^nf(x_0)$ for every $\varepsilon >0$ and every $n\geq 3$.
\end{proposition}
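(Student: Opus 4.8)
The plan is to unwind both definitions and observe that the single shift by $-\varepsilon|h|^2$ already forces the order-$n$ remainder to be nonnegative near $x_0$, after which passing to the higher power $|h|^n$ in the denominator can only help. Writing $h=x-x_0$, the hypothesis $(\zeta,P)\in\tilde{\partial}^2f(x_0)$ says precisely that
$$
\liminf_{h\to 0}\frac{f(x_0+h)-f(x_0)-\zeta(h)-P(h)}{|h|^2}\geq 0,
$$
while the conclusion $(\zeta,P_{\varepsilon},0,\dots,0)\in\tilde{\partial}^nf(x_0)$ amounts, after noting that the padded zero polynomials in positions $3,\dots,n$ contribute nothing to the numerator, to
$$
\liminf_{h\to 0}\frac{f(x_0+h)-f(x_0)-\zeta(h)-P(h)+\varepsilon|h|^2}{|h|^n}\geq 0.
$$

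First I would translate the $\liminf\geq 0$ hypothesis into an explicit one-sided bound. Fixing any $\delta$ with $0<\delta<\varepsilon$, there is $r>0$ such that for all $h$ with $0<|h|<r$ one has
$$
f(x_0+h)-f(x_0)-\zeta(h)-P(h)>-\delta|h|^2.
$$
Adding $\varepsilon|h|^2$ to both sides shows that on the punctured ball $0<|h|<r$ the numerator of the order-$n$ quotient satisfies
$$
f(x_0+h)-f(x_0)-\zeta(h)-P_{\varepsilon}(h)>(\varepsilon-\delta)|h|^2>0.
$$
Dividing by $|h|^n$, the order-$n$ quotient therefore exceeds $(\varepsilon-\delta)|h|^{2-n}$ on that ball, which is strictly positive; since $n\geq 3$ it in fact tends to $+\infty$ as $h\to 0$. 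In particular the $\liminf$ of the quotient is $\geq 0$, which is exactly the desired membership $(\zeta,P_{\varepsilon},0,\dots,0)\in\tilde{\partial}^nf(x_0)$.

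The only point that carries any content is the calibration of the shift: the translation must be by exactly $-\varepsilon|h|^2$, matching the order of the error term controlled by the hypothesis, so that the negativity permitted by the condition $\liminf\geq 0$ at order $2$ is absorbed with a margin. Once the numerator is rendered nonnegative on a neighbourhood of $x_0$, raising the exponent in the denominator from $2$ to $n\geq 3$ presents no obstruction whatsoever, and the zero polynomials may be inserted freely. I do not foresee any genuine difficulty; the statement is essentially the remark that a strict $-\varepsilon|h|^2$ margin at order $2$ is more than enough margin at every higher order.
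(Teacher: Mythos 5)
Your proof is correct and follows essentially the same route as the paper, which derives the proposition from the observation that the $-\varepsilon|h|^2$ shift turns the order-two jet into a local minorant of $f$ near $x_0$, after which nonnegativity of the numerator makes the order-$n$ quotient trivially nonnegative. Your explicit $\delta$-$r$ unwinding of the $\liminf$ is just a spelled-out version of that same argument.
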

\noindent However, this does not imply that $(\zeta ,P,0)\in \tilde{\partial} ^3f(x_0)$. In particular, we see that the subdifferential $\tilde{\partial} ^n f(x_0)$
as a subset of
$\mathcal{P} (^1\R^N)\times \dots \mathcal{P} (^n\R^N)$, is not necessarily closed for $n\geq 2$ (althought it is always closed for $n=1$).
Technical problems arise from this fact when one tries to extend the tools of nonsmooth analysis to higher order subdifferentials. In order to overcome these problems one could try to introduce limiting subdifferentials of higher order, but this would not lead us anywhere as far as nonsmooth Morse-Sard theorems are concerned; see Example \ref{MS fails for the limiting sbd} below. Another important theoretical disadvantage of this
subdifferential is the fact that there is no gap between subdifferentiability of the orders 2 and 3 (nor between subdifferentiability of order
$n-1$ and $n$ more generally). Namely, if $n\geq 3$ then there are no functions with nonempty subdifferential of order $n-1$ and with empty subdifferential of order $n$. 

For these reasons we introduce in this paper a slightly different subdifferential which will suit our investigation concerning nonsmooth generalizations of the Morse-Sard theorem.

\begin{defn}
{\em Let $f:\R^N\to \R$, $x_0\in \R^N$. If $f$ has a Taylor expansion of order $n-1$ at $x_0$,
we define $\partial ^nf(x_0)$ as the set of  $Q\in \mathcal{P} (^n\R^N)$ such that
$$
\liminf_{h\to 0}\frac{f(x_0+h)-f(x_0)-Df(x_0)(h)-P_{x_0}^2(h)-\dots -P_{x_0}^{n-1}(h)-Q(h)}{|h|^n}\geq 0.
$$
If either $f$ does not have a Taylor expansion of order $n-1$ at $x_0$, or there does not exist any $Q$ with such property, then we deem $\partial ^nf(x_0)$ to be empty.}
\end{defn}

In order to illustrate this definition let us have a look at two examples of functions, both of which are of class $C^2$, but the first one has a big  subdifferential of order $3$, while the second one has empty subdifferential of order $3$. For the function $f_1:\R \to \R$ defined by
$$
f_1(x)=
\left\{
  \begin{array}{ll}
    -x^2 & \hbox{if } x\leq 0; \\
    -x^2+x^3 & \hbox{otherwise},
  \end{array}
\right.
$$
we have $Df_1(0)\equiv 0$, $D^2f_1(0)(h)=-2h^2$, and $T\in \partial ^3f_1(0)(h)$ if and only if $T(h)=ah^3$
with $a\in [0,1]$.
However, for the function $f_2(x)=-|x|^3$, we have
$Df_2(0)\equiv 0$, $D^2f_2(0)\equiv 0$ and $\partial ^3f_2(0)=\emptyset$.

The subdifferential $\partial$ that we have just introduced is smaller and behaves better than $\tilde{\partial}$ does. For instance:

\begin{proposition}
The set $\partial ^nf(x_0)$ is convex and closed.
\end{proposition}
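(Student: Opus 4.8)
The plan is to reduce everything to two elementary facts: the superadditivity of $\liminf$, which will yield convexity, and the homogeneity estimate $|Q(h)|\le \|Q\|\,|h|^n$ for $Q\in\mathcal{P}(^n\R^N)$, which will yield closedness. First I would dispose of the degenerate case: if $f$ does not have a Taylor expansion of order $n-1$ at $x_0$, then $\partial^n f(x_0)=\emptyset$ by definition, and the empty set is trivially convex and closed. So I may assume that $f$ does have such an expansion, and it is then convenient to abbreviate
$$
R(h)=f(x_0+h)-f(x_0)-Df(x_0)(h)-P_{x_0}^2(h)-\dots-P_{x_0}^{n-1}(h),
$$
so that, by definition, $Q\in\partial^n f(x_0)$ if and only if $\liminf_{h\to 0}\frac{R(h)-Q(h)}{|h|^n}\ge 0$.

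For convexity I would take $Q_1,Q_2\in\partial^n f(x_0)$ and $\lambda\in[0,1]$, and simply split
$$
\frac{R(h)-(\lambda Q_1+(1-\lambda)Q_2)(h)}{|h|^n}
=\lambda\,\frac{R(h)-Q_1(h)}{|h|^n}+(1-\lambda)\,\frac{R(h)-Q_2(h)}{|h|^n}.
$$
Since $\lambda,1-\lambda\ge 0$ and $\liminf$ is superadditive (with $\liminf(\alpha g)=\alpha\liminf g$ for $\alpha\ge 0$), taking $\liminf_{h\to 0}$ of the right-hand side produces a quantity that is $\ge\lambda\cdot 0+(1-\lambda)\cdot 0=0$, so $\lambda Q_1+(1-\lambda)Q_2\in\partial^n f(x_0)$.

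For closedness I would take a sequence $Q_j\in\partial^n f(x_0)$ with $\|Q_j-Q\|\to 0$ and show $Q\in\partial^n f(x_0)$. The key point is that $Q_j-Q$ is an $n$-homogeneous polynomial, so $|Q_j(h)-Q(h)|\le\|Q_j-Q\|\,|h|^n$ for all $h$, whence
$$
\frac{R(h)-Q(h)}{|h|^n}\ge\frac{R(h)-Q_j(h)}{|h|^n}-\|Q_j-Q\|.
$$
Taking $\liminf_{h\to 0}$ and using $Q_j\in\partial^n f(x_0)$ gives $\liminf_{h\to 0}\frac{R(h)-Q(h)}{|h|^n}\ge-\|Q_j-Q\|$; letting $j\to\infty$ shows the left-hand side is $\ge 0$, and therefore $Q\in\partial^n f(x_0)$.

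I do not expect any serious obstacle: the argument is a routine exploitation of the definition. The only point that deserves attention is the homogeneity estimate used in the closedness step, where one must use precisely that the norm on $\mathcal{P}(^n\R^N)$ is the supremum over unit vectors, so that scaling converts $\|Q_j-Q\|$ into the factor $|h|^n$ that cancels the denominator. This, together with the harmless remark that the empty case has to be handled separately, is all the care the proof requires.
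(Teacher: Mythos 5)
Your argument is correct: the convexity step via superadditivity of $\liminf$ (legitimate here since both terms have nonnegative $\liminf$, so no $\infty-\infty$ issue arises) and the closedness step via the homogeneity bound $|Q_j(h)-Q(h)|\le\|Q_j-Q\|\,|h|^n$ are exactly the routine verifications the paper has in mind when it declares the proof ``straightforward'' and omits it. Nothing further is needed.
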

\noindent The proof is straightforward. 

However, $\partial ^nf(x_0)$ may still have a complicated structure. The following example shows that if $n\geq 2$ we may have to face the following situation (which already in the case $n=2$ prevents more straightforward strategies than the one we use from working out):
$$
Df(x_0)= 0, \ \ \ \sup_{|v|=1}P(v)<0 \ \hbox{for every} \ P\in \partial ^2f(x_0),
$$
and yet
$$
\sup \{ P(v): |v|=1,  P\in \partial ^2f(x_0)\} =0.
$$

\begin{ex}
{\em Define $P_n(x,y)=-(nx^2+\frac{1}{n}y^2)$. The function $f:\R^2\to \R$ defined by
$$
f(x,y)=\sup_nP_n(x,y)
$$
satisfies: $Df(\bar{0})=\bar{0}$,  $P_n\in \partial ^2f(\bar{0})$ for every $n$, and
$\sup_{|v|=1}P(v)<0$ for every $P\in \partial ^2f(\bar{0})$.}
\end{ex}
\begin{proof} First we observe that $f$ is a 2-homogeneous function and
  \begin{equation}\label{eq:1}
    -|(x,y)|^2 \leq f(x,y)\leq \min\{-x^2,-2|xy|\}.
  \end{equation}
From the first inequality it follows that $Df(\bar{0})=\bar{0}$. On the other hand $P_n\leq f$, hence $P_n\in \partial ^2f(\bar{0})$.

As $f$ is a 2-homogeneous function, $P\in   \partial ^2f(\bar{0})$ if, and
only if $P(v)\leq f(v)$ for all $v\in \mathbb{R}^2$. In this case, by
\eqref{eq:1}, if $P(x,y)=0$ then $x=0$. This implies that if 
$P(x,y)=-a\langle (x,y),e\rangle ^2$, $a>0$ and $e=(e_1,e_2)\in
\mathbb{S}^1$, then $P\notin \partial ^2f(\bar{0})$.
Indeed, if $P\in \partial ^2f(\bar{0})$, as $P(-e_2,e_1)=0$ then $e_2=0$ and therefore
\begin{equation*}
  P(1,y)=-a\leq f(1,y)\leq -2|y|
\end{equation*}
for all $y$, which  is not possible.
This implies that $P<0$ for every $P\in \partial ^2f(\bar{0})$.
\end{proof}

If $f$ has a Taylor expansion of order $n-1$ at $x_0$, we may also define the superdifferential of order $n$ as the set $\partial _{+}^nf(x_0)$ of all $n$-homogeneous polynomials $Q$ satisfying
$$
\limsup_{h\to 0}\frac{f(x_0+h)-f(x_0)-Df(x_0)(h)-P_x^2(h)-\dots -P_x^{n-1}(h)-Q(h)}{|h|^n}\leq 0.
$$
It is then clear that if $Q\in \partial _{+}^nf(x_0)\cap \partial^n f(x_0)$ then $f$ has Taylor expansion of order $n$ and $Q$ is the unique polynomial with this property. We also have the following result, whose proof is straightforward.

\begin{proposition}
For $n$ an odd integer, a function $f$ has a Taylor expansion of order $n$ at a point $x$ if and only if it has Taylor expansion of order $n-1$
at $x$ and $\partial^nf(x)\neq\emptyset \neq \partial _{+}^nf(x)$. In this case we also have
then $\partial^n f(x) =\partial _{+}^nf(x)=\{ P^{n}_{x}\}$, where $P^{n}_{x}$ is the $n$-homogeneous part of the Taylor expansion of order $n$ of $f$.
\end{proposition}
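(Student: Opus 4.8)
The plan is to split the equivalence into its two implications and then read off the uniqueness statement, with the whole argument pivoting on a single homogeneity-plus-parity observation. Throughout, write $R(h)=f(x+h)-f(x)-Df(x)(h)-P_x^2(h)-\dots-P_x^{n-1}(h)$ for the remainder of $f$ after its (assumed) Taylor polynomial of order $n-1$; this is well defined precisely because in every part of the statement we assume $f$ has a Taylor expansion of order $n-1$ at $x$. In this notation $Q\in\partial^nf(x)$ means $\liminf_{h\to0}(R(h)-Q(h))/|h|^n\geq0$, while $Q\in\partial_+^nf(x)$ means $\limsup_{h\to0}(R(h)-Q(h))/|h|^n\leq0$.

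The forward implication is immediate. If $f$ has a Taylor expansion of order $n$ at $x$, then truncating it gives one of order $n-1$, and, denoting by $P_x^n$ its $n$-homogeneous part, the defining limit $\lim_{h\to0}(R(h)-P_x^n(h))/|h|^n=0$ splits into $\liminf\geq0$ and $\limsup\leq0$; hence $P_x^n\in\partial^nf(x)\cap\partial_+^nf(x)$, so both sets are nonempty.

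For the converse I would pick arbitrary $Q_1\in\partial^nf(x)$ and $Q_2\in\partial_+^nf(x)$. Subtracting the two defining inequalities and using $\liminf(a-b)\geq\liminf a-\limsup b$, I obtain $\liminf_{h\to0}(Q_2-Q_1)(h)/|h|^n\geq0$. Now I use that $D:=Q_2-Q_1$ is $n$-homogeneous: along a ray $h=tv$ with $|v|=1$ and $t\to0^+$ the quotient $D(tv)/|tv|^n$ equals the constant $D(v)$, so the liminf condition forces $D(v)\geq0$ for every unit vector $v$. This is the crux, and it is exactly where the parity hypothesis enters: since $n$ is odd, $D(-v)=(-1)^nD(v)=-D(v)$, and applying $D\geq0$ at both $v$ and $-v$ yields $D(v)=0$ for all $v$, i.e. $Q_1=Q_2$. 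For $n$ even this deduction fails, as the Example above shows, and so does the statement itself.

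Finally, setting $Q:=Q_1=Q_2$, the two one-sided conditions combine into $\lim_{h\to0}(R(h)-Q(h))/|h|^n=0$, which is precisely the assertion that $f$ has a Taylor expansion of order $n$ at $x$ with $n$-homogeneous part $Q$; by uniqueness of Taylor expansions $Q=P_x^n$. Since $Q_1\in\partial^nf(x)$ and $Q_2\in\partial_+^nf(x)$ were arbitrary and were shown to coincide, each of $\partial^nf(x)$ and $\partial_+^nf(x)$ reduces to the single point $P_x^n$, giving $\partial^nf(x)=\partial_+^nf(x)=\{P_x^n\}$. The only genuinely nontrivial point in the argument is the homogeneity-and-parity deduction that $D\equiv0$; everything else is bookkeeping with the definitions and elementary $\liminf/\limsup$ inequalities.
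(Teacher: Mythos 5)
Your proof is correct and is essentially the intended argument: the paper omits the proof as ``straightforward,'' and the one step that is not pure bookkeeping --- that for two candidates $Q_1\in\partial^nf(x)$, $Q_2\in\partial_+^nf(x)$ the $n$-homogeneous difference $D=Q_2-Q_1$ satisfies $D\geq 0$ on rays and hence, by oddness of $n$, $D\equiv 0$ --- is exactly the point where the parity hypothesis is meant to enter. Your handling of the $\liminf/\limsup$ subtraction and the reduction of both sets to the singleton $\{P_x^n\}$ is sound.
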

%\begin{proof}
%If $P\in \partial _{-}^nf(x)$ and $Q\in \partial _{+}^nf(x)$, it is enough to prove that $P=Q$, but this is an easy consequence of
%Proposition \ref{la subdif de orden n como subdif de orden 1}.
%\end{proof}

Other interesting properties of this subdifferential can of course be established. However, our motivation to introduce the higher order subdifferential $\partial^n$ is the fact that with this tool we will be able to obtain the following very general version of the Morse-Sard theorem for real-valued functions, which is the main result of this paper.

\begin{thm}\label{main theorem}
Let $f:\R^n \to \R$ be a function, $n\geq 2$, and let $C_f$ be the set of points  $x\in\R^n$ such that $\partial^{n}f(x)\neq\emptyset$ and $Df(x)=0$. 
Then $\mathcal{L}^1 \bigl( f(C_f)\bigr) =0$.

\noindent The same statement holds true if $\R^n$ is replaced with an open subset of $\R^n$.
\end{thm}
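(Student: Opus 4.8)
The plan is to induct on the dimension $n$. The base case $n=1$ is the elementary one-dimensional fact that a differentiable function with vanishing derivative on a set $A\subseteq\R$ maps $A$ to an $\mathcal{L}^1$-null set; note that every $x\in C_f$ carries a Taylor expansion of order $n-1\ge 1$, so $f$ is in particular differentiable at $x$ with $Df(x)=0$. For the inductive step I would first reduce, by intersecting with a fixed cube and using countable subadditivity, to proving $\mathcal{L}^1\bigl(f(A)\bigr)=0$ for a single piece $A$ of a countable decomposition of $C_f$ on which all the relevant data are uniform: a common radius $\delta_0$ below which the order-$n$ subdifferential inequality holds for a fixed small $\varepsilon$, a common bound $\|P^k_x\|\le M$ on the Taylor polynomials and on a selected $Q_x\in\partial^n f(x)$, and a common modulus for the order-$(n-1)$ Taylor remainder. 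Each of these is a countably determined condition, so such a decomposition exists.

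The device that converts the one-sided information into usable two-sided control is to apply the subdifferential inequality at \emph{both} endpoints of a close pair $x,y\in A$. Writing $h=y-x$ and $T_x=P^2_x+\dots+P^{n-1}_x$, the inequality at $x$ yields a lower bound and the inequality at $y$, read for the increment $-h$, yields an upper bound:
\begin{equation*}
T_x(h)+Q_x(h)-\varepsilon|h|^n\ \le\ f(y)-f(x)\ \le\ -T_y(-h)-Q_y(-h)+\varepsilon|h|^n .
\end{equation*}
Comparing with the two-sided order-$(n-1)$ Taylor expansions at $x$ and at $y$ forces the low-degree jets almost to cancel, $T_x(h)+T_y(-h)=o(|h|^{n-1})$ uniformly on $A$. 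The outcome is asymmetric but decisive: the lower bound on $f(y)-f(x)$ is sharp to order $n$ — this is exactly the extra order supplied by the subdifferential and unavailable from the Taylor expansion alone — while the upper bound inherits only the order-$(n-1)$ flatness of the Taylor jet. When the jet $T_x$ vanishes the two bounds collapse to a clean $|f(y)-f(x)|\le C|h|^n$.

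With this estimate I would run a dyadic covering of $A$: split a cube of side $\ell$ into subcubes of side $s$ and sum the oscillation of $f$ over $A$ in each subcube. On the part of $A$ where $T_x$ is negligible this oscillation is $O(s^n)$; since there are $\sim(\ell/s)^n$ subcubes the total is $O(1)$, i.e.\ we sit exactly at the borderline exponent (flatness order $=$ ambient dimension), so a naive count does not even tend to $0$. This is the same difficulty Bates meets for $C^{n-1,1}$ maps, and it is resolved not by improving the per-cube bound but by a refined covering in which one controls how the critical points concentrate: the points $y\in A$ near $x$ that are themselves critical should cluster near the critical set of the model polynomial $T_x+Q_x$, a set of dimension $\le n-1$ on which that polynomial varies much less, and iterating this concentration against the inductive hypothesis in dimension $n-1$ brings the total down to $0$.

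\emph{The hard part} is precisely this concentration step. In the classical smooth proof one peels off the successive strata $\{D^jf=0\}$ with the implicit function theorem applied to the coordinates of $\nabla f$; here that tool is unavailable, since $f$ need not be of class $C^1$ near $C_f$ and the higher Taylor coefficients $P^k_x$ ($k\ge 2$) are \emph{not} derivatives of $f$, so the Taylor expansion cannot be differentiated to relate $Df(y)$ to $\nabla T_x(y-x)$. The whole strength of the order-$n$ subdifferential must be spent here, to manufacture from the order-$(n-1)$ Taylor expansion and the asymmetric estimate above a Lipschitz substitute for the strata — a set of dimension $\le n-1$ trapping the critical points lying in the non-degenerate part of $A$ — onto which the problem can be transferred and the inductive hypothesis invoked. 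Carrying out this trapping rigorously at the borderline exponent is the crux on which the argument stands or falls.
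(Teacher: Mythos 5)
Your opening move is exactly right and coincides with the paper's key observation: applying the one-sided order-$n$ inequality at both endpoints of a pair $x,y$ turns the subdifferential into a two-sided estimate, and on the stratum where the whole jet $P_x^1+\dots+P_x^{n-1}$ vanishes this collapses to $|f(y)-f(x)|\le C|x-y|^n$ (this is the content of Lemma \ref{polinomio de taylor 0}). From that point on, however, the proposal does not close. You correctly observe that the naive dyadic count at the borderline exponent gives $O(1)$ rather than $o(1)$, and you then defer the entire difficulty to an unspecified ``concentration/trapping'' step which you explicitly do not carry out; as written, nothing is proved. The paper resolves the borderline case not by a refined covering but by outsourcing it: the estimate $|f(x)-f(y)|\le C|x-y|^n$ is precisely the hypothesis of the $C^{n-1,1}$ Whitney extension theorem with all prescribed derivatives equal to zero (Corollary \ref{consequence of Whitney ET}), so $f$ restricted to such a piece extends to a genuine $C^{n-1,1}$ function on $\R^n$, to which Bates's Morse--Sard theorem applies. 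No counting argument is attempted at the critical exponent.

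The second gap is the treatment of the non-flat strata, where your inductive scheme is not actually set up. The paper splits $C_f$ into the flat set $A$, the set $B$ where only $P_x^{n-1}$ survives, and the remainder (nonempty only for $n\ge 4$). On $B$ the mechanism is geometric: in a suitable direction $e$ each short segment parallel to $e$ meets the stratum in at most one point (because $P^{n-1}_x(te)$ dominates the remainder there), so $f$ factors through the orthogonal projection onto a hyperplane, and one applies Whitney extension plus Bates in dimension $n-1$ to the projected function --- which satisfies a flatness estimate of order $n-1$ but need not have any subdifferential of order $n-1$, so your proposed inductive hypothesis (the theorem itself in dimension $n-1$) is not the right tool and its hypotheses are never verified for the restricted function. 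On the remaining stratum the paper needs the Liu--Tai/De Giorgi estimates to upgrade $f$ to a $C^{n-2,1}$ extension, the implicit function theorem to trap the stratum in $(n-1)$-dimensional manifolds, and a $C^{k,1}$ Kneser--Glaeser composition theorem; none of this machinery, or any substitute for it, appears in your sketch. In short: the two-endpoint trick is the right start, but the crux you flag as ``the hard part'' is exactly what a proof must supply, and the paper supplies it by reduction to Bates via Whitney extension rather than by the concentration argument you gesture at.
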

\noindent Here, as in the rest of the paper, $\mathcal{L}^{N}$ denotes Lebesgue's outer measure in $\R^N$. Notice that if $\partial^n f(x)\neq\emptyset$ and $n\geq 2$ then, according to our definition of $\partial^n f(x)$, $f$ has a Taylor expansion of order $n-1$ at $x$, and in particular $Df(x)$ exists.

Our method of proof will also allow us to establish sharper versions of Theorem \ref{main theorem} in the special cases $n=2, 3$. A similar result for the case $n=1$ is easy and probably known, but nonetheless we include a proof for the reader's convenience. Perhaps the most interesting one is that of the case $n=2$, for which we obtain the following Morse-Sard theorem for the proximal subdifferential. Recall that for a lower semicontinuous function $f:\R^d\to (-\infty, \infty]$ the proximal subdifferential of $f$ (at a point $x$ where $f(x)<\infty$) is denoted by $\partial_{P}f(x)$ and defined as the set of all $\zeta\in\R^d$ for which there exist $\sigma, \eta>0$ such that
$$
f\left( y \right) \geq f\left( x \right) + \left\langle {\zeta ,y
- x} \right\rangle  - \sigma |y - x|^2 
$$
for all $y \in B\left( {x,\eta } \right)$. The set $\partial_{P}f(x)$ coincides with $\{\zeta\in\R^d : \zeta=\nabla\varphi(x), \varphi\in C^2(\R^d), f-\varphi$ attains a minimum at $x\}$.
\begin{thm}\label{theorem for the proximal subdifferential in the plane}
Let $f:\R^2\to \R$ be a lower semicontinuous function. Then
$$
\mathcal{L}^1\left(f\left(\{ x\in \R^2: 0\in \partial _Pf(x)\}\right)\right)=0.
$$
\end{thm}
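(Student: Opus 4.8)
The plan is to reduce the statement to the $C^{1,1}$ Morse--Sard theorem already at our disposal, exploiting the fact that although the condition $0\in\partial_P f(x)$ only encodes a one-sided quadratic bound, a two-sided bound appears as soon as one compares two points that both carry the subgradient $0$. First I would stratify the set $A:=\{x\in\R^2:0\in\partial_P f(x)\}$ according to the constants in the definition of $\partial_P$. For $k\in\N$ set
$$
A_k:=\Bigl\{x\in\R^2: f(y)\geq f(x)-k\,|y-x|^2 \ \text{ for all } y\in B(x,1/k)\Bigr\},
$$
so that $A=\bigcup_k A_k$; since $f(A)=\bigcup_k f(A_k)$ and a countable union of null sets is null, it suffices to prove $\mathcal{L}^1(f(A_k))=0$ for each fixed $k$. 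The key observation is that if $x,x'\in A_k$ with $|x-x'|<1/k$, then applying the defining inequality of $A_k$ at $x$ with $y=x'$ and at $x'$ with $y=x$ yields the two-sided estimate $|f(x)-f(x')|\leq k\,|x-x'|^2$. In particular $f$ has a uniform quadratic modulus of continuity on $A_k$ restricted to any set of diameter $<1/k$.

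Next I would localize and extend. Cover $\R^2$ by a countable grid of cubes $Q$ of diameter strictly less than $1/k$; it is enough to show $\mathcal{L}^1(f(A_k\cap Q))=0$ for each such $Q$. On the closed set $E:=\overline{A_k\cap Q}$ the estimate above (which passes to the closure by uniform continuity) says precisely that the $1$-jet assigning to each point of $E$ the value $f$ and the gradient $0$ satisfies the hypotheses of the Whitney extension theorem for the class $C^{1,1}$: indeed $|f(x)-f(x')-0\cdot(x-x')|\leq k\,|x-x'|^2$, and the prescribed gradient field $0$ is trivially Lipschitz. Hence there exists $F\in C^{1,1}(\R^2)$ with $F=f$ and $\nabla F=0$ on $A_k\cap Q$.

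It then remains to invoke the Morse--Sard theorem in a setting where we already have it. Since $\nabla F$ vanishes on $A_k\cap Q$, every point of $A_k\cap Q$ is a critical point of $F$; moreover, if $L$ is a Lipschitz constant for $\nabla F$, then at any critical point $x$ one has $F(x+h)-F(x)=\int_0^1\nabla F(x+th)\cdot h\,dt\geq-\tfrac{L}{2}|h|^2$, so that $DF(x)=0$ and $\partial^2 F(x)\neq\emptyset$, i.e.\ $x\in C_F$ in the notation of Theorem~\ref{main theorem}. Therefore
$$
f(A_k\cap Q)=F(A_k\cap Q)\subseteq F(C_F),
$$
and Theorem~\ref{main theorem} (equivalently, Bates's theorem \cite{Bates} for $C^{1,1}$ functions) gives $\mathcal{L}^1(F(C_F))=0$. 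Summing over the countably many cubes $Q$ and over $k$ finishes the proof.

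The main obstacle is conceptual rather than computational. A naive covering of $A_k$ by balls of radius $r$, using only the two-sided estimate, shows that the image of each ball has diameter $O(k r^2)$, and multiplying by the $O(r^{-2})$ balls needed to cover a bounded region yields merely that $f(A_k)$ has \emph{finite}, not vanishing, $\mathcal{L}^1$-measure; this is the usual critical-exponent obstruction of Morse--Sard at borderline smoothness. The content of the argument is that this difficulty is entirely absorbed into the already-established $C^{1,1}$ case, the bridge being the upgrade from one-sided to two-sided quadratic control together with the Whitney extension. The only point requiring care is that the estimate $|f(x)-f(x')|\leq k|x-x'|^2$ is available only for pairs within distance $1/k$, which is exactly what forces the preliminary decomposition into small cubes before the extension step.
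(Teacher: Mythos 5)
Your proposal is correct and follows essentially the same route as the paper: the authors prove this theorem by noting that the argument of Lemma \ref{polinomio de taylor 0} applies verbatim, i.e.\ they decompose the critical set into pieces with a uniform one-sided quadratic bound on small balls, symmetrize to get the two-sided estimate $|f(x)-f(y)|\leq (m+1)|x-y|^2$ on cubes of small diameter, extend by the $C^{1,1}$ Whitney theorem (Corollary \ref{consequence of Whitney ET}) with vanishing gradient on the set, and conclude with Bates's theorem. Your symmetrization step, the cube decomposition forced by the $1/k$-local validity of the estimate, and the final reduction are exactly the paper's argument.
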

Note that the above result generalizes \cite[Theorem 8]{Rifford}. On the other hand, the following example shows that there are functions which are not in the class $BV_{2}(\R^2)$ (and therefore cannot be concluded to have the Morse-Sard property by using the Bourgain-Korobkov-Kristensen Theorem of \cite{BourKoKris1, BourKoKris2}) but do satisfy the mild assumptions of Theorems \ref{theorem for the proximal subdifferential in the plane} and \ref{main theorem}. Recall that the class $BV_{n}(\R^n)$ is defined as the set of all integrable functions whose distributional derivatives of order $n$ are finite Radon measures; see \cite{Dorronsoro} for information about differentiability properties of these functions.
\begin{ex}
{\em Let $C\subset [0,1]$ be a Cantor-like set of positive measure.
Construct a continuous function $g:[0,1]\to \R$ as follows. Set $g(x)=0$ for every $x\in C$ and, for
each of the $2^{n-1}$ intervals $I_n^j$ of length $l_n$ that are removed from an interval $I^{k}_{n-1}$ at step $n$ in the construction of $C$,
consider a subinterval $J_n^j$ of length $\frac{l_n}{3}$ centered at the same point as $I_n^j$. Define $g$ on $I_n^j$ as a differentiable function which is not of bounded variation and such that $0\leq g(x)\leq l_n^{\frac{3}{2}}$ and $g(x)=0$ for every
$x\in I_n^j\setminus J_n^j$.
The function $F:(0,1)^2\to \R$ defined by $F(x,y)=f(x)+f(y)$ with $f(x)=\int_0^xg(t)dt$ satisfies $C\times C\subset C_F$ and has a Taylor expansion of
order two at every point, but it does not have a $BV$ derivative (and in particular $g$ is not $C^{1,1}_{\textrm{loc}}$ either). However, $F$ satisfies the hypotheses of Theorem \ref{main theorem}, and consequently has the Morse Sard property. }
\end{ex}

For the case $n=3$ we have the following.

\begin{thm}\label{Theorem for n=3}
Let $f:\R^3\to \R$ be a lower semicontinuous function. Then  $\mathcal{L}^{1} \bigl( f(C_f)\bigr) =0$,
where $C_f$ is defined as the set of $x\in\R^3$ for which $Df(x)=0$ and
$\partial ^3f(x)$ is nonempty.
\end{thm}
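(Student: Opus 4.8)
The plan is to deduce the three-dimensional statement from the planar result, Theorem~\ref{theorem for the proximal subdifferential in the plane}, by analysing the second-order behaviour of $f$ along the critical set and reducing, stratum by stratum, to a two-variable problem. The starting point is a transfer lemma: if $x\in C_f$ and $\Pi$ is any affine $2$-plane through $x$, then $0\in\partial_P(f|_\Pi)(x)$. Indeed, since $\partial^3f(x)\neq\emptyset$ there is a cubic form $T_x$ with $f(x+h)\ge f(x)+P_x^2(h)+T_x(h)-o(|h|^3)$ (recall $Df(x)=0$), whence for $h$ parallel to $\Pi$ and small, $f(x+h)\ge f(x)-(\|P_x^2\|+\|T_x\|+1)\,|h|^2$; as $f|_\Pi$ is lower semicontinuous this is exactly $0\in\partial_P(f|_\Pi)(x)$. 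Consequently Theorem~\ref{theorem for the proximal subdifferential in the plane} yields $\mathcal{L}^1\!\left(f(C_f\cap\Pi)\right)=0$ for every plane $\Pi$. This is where lower semicontinuity is genuinely used.

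First I would reduce to a uniform situation: decomposing $C_f$ into countably many pieces on which the Taylor remainder of order two and the order-three lower estimate are controlled at a fixed scale, and on which the maps $x\mapsto P_x^2$ and $x\mapsto T_x$ take values in small balls of $\mathcal{P}(^2\R^3)$ and $\mathcal{P}(^3\R^3)$. Since a countable union of null sets is null, it suffices to treat one such piece $E$, with a reference quadratic form $A$ approximating all $P_x^2$, $x\in E$. Next I would stratify according to this quadratic form. If $A$ has a direction $v$ along which it is definite, say $A(v)\ge c>0$ uniformly on $E$, then each $x\in E$ is a strict local minimum of $f$ along $x+\R v$; comparing the expansions at two points $x,x+tv\in E$ forces $0\ge ct^2$, so $E$ meets every line parallel to $v$ in a uniformly separated set. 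Hence $E$ is a countable union of sets on which the orthogonal projection onto $v^\perp\cong\R^2$ is injective, and pushing $f$ forward by this projection produces a lower semicontinuous two-variable function to which the transfer lemma and Theorem~\ref{theorem for the proximal subdifferential in the plane} apply, giving $\mathcal{L}^1(f(E))=0$. The same works when $A$ has a negative-definite direction (strict local maximum). This disposes of every stratum on which $A\neq0$.

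The hard part is the fully degenerate stratum where $A=0$, i.e.\ $f(x+h)=f(x)+o(|h|^2)$ together with only the one-sided estimate $f(x+h)\ge f(x)+T_x(h)-o(|h|^3)$; here the separation mechanism is unavailable and a naive covering is off by one power of the scale, since order-two flatness alone gives oscillation $o(\delta^2)$ on a $\delta$-cube whereas covering a $3$-dimensional set needs $o(\delta^3)$. On the part of this stratum of positive Lebesgue measure one can still win: at density points the order-two flatness propagates along $E$-chains, because the quadratic remainder makes $\sum_i|\Delta_i|^2\to0$ as the chain is refined, so $f$ is locally constant there and its image is null. The remaining obstacle, and the place where the order-three subdifferential must really be exploited, is the measure-zero part of the stratum $\{A=0\}$, which may still be large in the Hausdorff sense; I expect to control it by combining the cubic lower bound $T_x$ with a refined covering, or by a further reduction to the planar result along adapted slices, and this is the step I expect to require the most care.
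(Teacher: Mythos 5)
Your splitting of $C_f$ according to whether the quadratic part $P_x^2$ vanishes is exactly the paper's decomposition $C_f=A\cup B$, and your separation mechanism on the nondegenerate stratum (a direction $v$ with $P_x^2(v)$ uniformly signed forces at most one point of each uniformized piece on every line parallel to $v$) is the argument of Lemma \ref{n impar}. But the proposal has a genuine gap precisely where you flag it: the fully degenerate stratum $\{P_x^2\equiv 0\}$. The missing idea is that no refined covering, density-point chain argument, or case distinction between the positive-measure and null parts is needed there, because the one-sided cubic lower bound holds at \emph{every} point of the stratum; applying it once centered at $x$ and once centered at $y$, for two points $x,y$ of the same uniformized piece $D$, yields the \emph{two-sided} estimate $|f(x)-f(y)|\le (m+1)|x-y|^3$ on $D$ (this is inequality \eqref{control oerden n}). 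Corollary \ref{consequence of Whitney ET} then extends $f|_D$ to a $C^{2,1}$ function on $\R^3$ that is $2$-flat on $D$, and Bates's theorem gives $\mathcal{L}^1(f(D))=0$. As you yourself observe, the naive oscillation count with the order-$3$ bound only produces a finite bound rather than zero, so the Whitney-plus-Bates step is essential and is absent from your plan; moreover your chain argument on the positive-measure part is inconclusive as stated, since ``$f$ constant on each chain-connected component'' does not make $f(E)$ null when there may be uncountably many components.

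A secondary, fixable hole occurs on the nondegenerate stratum: the pushforward $g=f\circ\pi^{-1}$ is defined only on $\pi(E)\subset\R^2$, whereas Theorem \ref{theorem for the proximal subdifferential in the plane} requires a lower semicontinuous function on all of $\R^2$ with $0\in\partial_P g$ in the full (not relative) sense; you do not explain how to extend $g$ while preserving that property. The clean repair --- and the paper's route --- is to note that the separation gives $|\bar x-\bar y|\ge\sin\alpha\,|x-y|$, hence $|g(\bar x)-g(\bar y)|\le C|\bar x-\bar y|^{2}$ on $\pi(E)$, so Corollary \ref{consequence of Whitney ET} extends $g$ to a $C^{1,1}$ function on $\R^2$ with null derivatives on $\pi(E)$, and Bates's theorem concludes. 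Your transfer lemma to planes is correct but cannot carry the whole proof, since $C_f$ need not be contained in countably many planes.
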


\begin{remark}
{\em It is impossible to have Theorem \ref{main theorem} or Theorem \ref{Theorem for n=3} if we replace $\partial ^nf(x)$ with
$\tilde{\partial}^nf(x)$. Indeed, as is well known from Whitney's \cite{Whitney} and others' examples, there exist $C^2$ functions on $\R^3$  that fail to have the
Morse-Sard Property;  however, as we observed in Proposition \ref{nonemty subd of order 2 implies nonempty subd for every n} we have $\tilde{\partial}^3f(x)\neq \emptyset$ for every $f\in C^{2}(\R^3)$ and every $x\in\R^3$.}
\end{remark}

In the case $n=1$ we also have the following special result.

\begin{proposition}\label{Theorem for n=1}
Let $f:\R \to \R$ be a lower semicontinuous function, and denote
$
C_f=\{ x\in \R: 0\in \partial f(x)\}.
$
Then $\mathcal{L}^1 \bigl( f(C_f)\bigr) =0$.
\end{proposition}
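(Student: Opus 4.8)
The plan is to exploit the fact that the condition $0\in\partial f(x)$ is a one-sided, first-order flatness condition that, once made uniform over a suitable stratum, forces $f$ to be genuinely Lipschitz on that stratum with an arbitrarily small constant. Unravelling the definition, $x\in C_f$ means that for every $\varepsilon>0$ there is $\delta>0$ such that
\[
f(x+h)\ge f(x)-\varepsilon|h|\qquad\text{whenever }|h|<\delta .
\]
Fix once and for all an integer $p\ge 1$ and a radius $R>0$. For each integer $q\ge 1$ I would set
\[
A_{p,q}=\Bigl\{x\in C_f\cap[-R,R]\ :\ f(t)\ge f(x)-\tfrac1p|t-x|\ \text{ for all }|t-x|<\tfrac1q\Bigr\}.
\]
By the displayed characterisation (taking $\varepsilon=1/p$) every $x\in C_f\cap[-R,R]$ lies in $A_{p,q}$ as soon as $q$ is large, so the sets $A_{p,q}$ increase with $q$ and $\bigcup_{q}A_{p,q}=C_f\cap[-R,R]$.

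The heart of the argument is that $f$ is $\tfrac1p$-Lipschitz on each $A_{p,q}$ at scales below $1/q$. Indeed, if $x,x'\in A_{p,q}$ and $|x-x'|<1/q$, then applying the defining inequality of $A_{p,q}$ at $x$ (with $t=x'$) and at $x'$ (with $t=x$) yields $|f(x)-f(x')|\le\tfrac1p|x-x'|$. Hence, partitioning $[-R,R]$ into finitely many intervals $I_1,\dots,I_M$ each of length $<1/q$, the restriction $f|_{A_{p,q}\cap I_j}$ is $\tfrac1p$-Lipschitz, and since a Lipschitz map with constant $L$ does not increase Lebesgue outer measure by more than the factor $L$,
\[
\mathcal{L}^1\bigl(f(A_{p,q})\bigr)\le\sum_{j=1}^{M}\mathcal{L}^1\bigl(f(A_{p,q}\cap I_j)\bigr)\le\frac1p\sum_{j=1}^{M}\mathcal{L}^1(I_j)=\frac{2R}{p}.
\]

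Finally I would let $q\to\infty$. Because $f(A_{p,q})$ increases to $f(C_f\cap[-R,R])$ and Lebesgue outer measure is continuous along increasing sequences of (not necessarily measurable) sets, the uniform bound above gives $\mathcal{L}^1\bigl(f(C_f\cap[-R,R])\bigr)\le 2R/p$. Letting $p\to\infty$ shows $\mathcal{L}^1\bigl(f(C_f\cap[-R,R])\bigr)=0$, and letting $R\to\infty$ along integers (a countable union) yields $\mathcal{L}^1(f(C_f))=0$. The only genuinely delicate point is the measure-theoretic bookkeeping: since $C_f$ need not be measurable, one must either invoke continuity from below for the outer measure $\mathcal{L}^1$ (which is legitimate because $\mathcal{L}^1$ is Borel regular) or argue through measurable hulls of the sets $f(A_{p,q})$. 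Everything else is elementary, and lower semicontinuity of $f$ enters only in making sense of $\partial f$.
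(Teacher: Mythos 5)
Your proposal is correct and follows essentially the same route as the paper's own proof: the same stratification of $C_f$ by a uniform radius for the inequality $f(t)\ge f(x)-\varepsilon|t-x|$ (your $A_{p,q}$ are the paper's sets $D_j$ localized to $[-R,R]$), the same two-sided application of that inequality to get an $\varepsilon$-Lipschitz bound on each small interval, and the same passage to the limit via continuity of $\mathcal{L}^1$ along increasing sequences. No gaps; nothing further to add.
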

The following example taken from \cite{ClarkeEtAl} shows that the preceding proposition
is no longer true if we replace $\partial f$ with the $\partial_{L}f$, the limiting subdifferential of $f$ (see \cite{ClarkeEtAl, CIL, Ferrera} and the references therein for definitions and background on various subdifferentials).

\begin{ex}\label{MS fails for the limiting sbd}
{\em Let $C\subset [0,1]$ be a measurable set such that $0<\mathcal{L}^1 (C\cap I)<1$ for every interval $I$. Let
$$
f(x)=\int_0^x\chi _C(t)dt.
$$
It is easy to see that $0\in \partial _Lf(x)$ for every $x\in [0,1]$. But $f$ is clearly not constant.}
\end{ex}

A referee pointed out that our proofs in a previous version of this paper could be combined with some ideas of \cite{LiuTai} in order to yield the following generalization of Theorem \ref{main theorem}. 
For a set $E\subset\R^n$, $n\geq 2$, let us say that $f\in \widetilde{C}^{n-1,1}(E)$ if $f$ has a Taylor expansion $T_{n-1}(x; \cdot)$ of order $n-1$ at every $x\in E$ and 
\begin{equation}\label{definition of classes C tilde E}
\liminf_{y\to x}\frac{f(y)-T_{n-1}(x; y)}{|x-y|^n}>-\infty
\end{equation}
for all $x\in E$ (in particular note that $Df(x)$ exists and $f$ is continuous at $x$ for every $x\in E$). 

\begin{thm}\label{main theorem improved}
Let $E\subset\R^n$ be a set, $n\geq 2$ and $f\in \widetilde{C}^{n-1,1}(E)$. Suppose that $Df(x)=0$ for all $x\in E$.  
Then $\mathcal{L}^1 \bigl( f(E)\bigr) =0$.

\noindent The same statement holds true if $\R^n$ is replaced with an open subset of $\R^n$.
\end{thm}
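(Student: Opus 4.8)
The plan is to reduce the statement to the covering argument already developed for Theorem \ref{main theorem}, since the hypothesis $f\in\widetilde{C}^{n-1,1}(E)$ together with $Df=0$ on $E$ supplies, at every point $x\in E$, exactly the pointwise data that this argument exploits at each point of $C_f$: a Taylor expansion $T_{n-1}(x;\cdot)$ of order $n-1$ with vanishing linear part, and a one-sided lower estimate $f(y)\ge T_{n-1}(x;y)-M_x|y-x|^n$ for $y$ near $x$. First I would reduce to $E$ bounded, writing $E=\bigcup_m\bigl(E\cap B(0,m)\bigr)$ and using countable subadditivity of $\mathcal{L}^1$. Then I would stratify $E$ by the size of the constants: for $k\in\N$ let $E_k$ be the set of $x\in E$ for which $f(y)-T_{n-1}(x;y)\ge -k|y-x|^n$ whenever $|y-x|\le 1/k$ and for which the homogeneous parts of $T_{n-1}(x;\cdot)$ have norm at most $k$. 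By the definition of $\widetilde{C}^{n-1,1}(E)$ one has $E=\bigcup_k E_k$, so it suffices to prove $\mathcal{L}^1\bigl(f(E_k)\bigr)=0$ for each fixed $k$.

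On a fixed piece $A:=E_k$ I would re-run the estimate of Theorem \ref{main theorem}. Its driving ingredient is available here: for $x,y\in A$ with $|x-y|\le 1/k$, adding the one-sided estimate at $x$ to the one at $y$ and using $Df(x)=Df(y)=0$ yields the symmetric inequality
\[
\sum_{j=2}^{n-1}\bigl(P_x^j(y-x)+(-1)^jP_y^j(y-x)\bigr)\le 2k\,|x-y|^n,
\]
where $P_x^j$ is the $j$-homogeneous part of $T_{n-1}(x;\cdot)$; this controls the mismatch between the Taylor polynomials of $f$ at nearby points and uses nothing beyond the one-sided order-$n$ bound. Because $A$ is an abstract set rather than one described through a subdifferential, the genuinely new point is measurability and the passage from the pointwise (hence non-uniform) remainder $o(|y-x|^{n-1})$ to uniform estimates on large subsets. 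Here I would invoke the Lusin-type results of \cite{LiuTai} relating Taylor expansions of order $n-1$ to coincidence, off sets of arbitrarily small measure, with functions of class $C^{n-1}$ — indeed of class $C^{n-1,1}$ once the order-$n$ lower bound is incorporated. Concretely, up to a set of measure zero $A$ is a countable union of compacta on each of which $f$ agrees with some $g\in C^{n-1,1}$ with $Dg=0$ there; applying Bates's theorem \cite{Bates} (equivalently the abstract result of \cite{AFG}) to $g$ shows that the corresponding portion of $f(A)$ is $\mathcal{L}^1$-null.

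The hard part will be that the flatness hypothesis is only one-sided, whereas the classical Morse--Sard mechanism for real-valued maps draws its gain from a stratification of the critical set by the order of vanishing of the higher Taylor coefficients, which a priori requires two-sided control. On a cube of side $\delta$ meeting $A$ the oscillation of $f$ is in general only $O(\delta^2)$, so a naive cube count fails for $n\ge 3$, and reproducing the efficiency of the stratified covering with merely one-sided order-$n$ information is delicate. This is precisely the difficulty resolved in the proof of Theorem \ref{main theorem}; the role of \cite{LiuTai} is to transport that resolution to an abstract set $E$ and, in particular, to guarantee that the exceptional null set produced by the Lusin decomposition also has $\mathcal{L}^1$-null image — the step where one must combine the covering estimate with the one-sided bound rather than rely on a pointwise oscillation bound alone.
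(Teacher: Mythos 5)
There is a genuine gap, and it sits exactly where you locate ``the hard part.'' First, the Lusin-type reduction you propose --- covering $A$, up to an $\mathcal{L}^n$-null set, by compacta on which $f$ coincides with a $C^{n-1,1}$ function --- cannot close the argument: an $\mathcal{L}^n$-null set of critical points may well have an image of positive $\mathcal{L}^1$-measure (in Whitney's example \cite{Whitney} the critical set is an arc, hence Lebesgue-null, and its image is an interval). What the paper does instead is verify Whitney's $C^{n-1,1}$ (resp.\ $C^{n-2,1}$) compatibility conditions at \emph{every} pair of points of each stratum and then apply the Whitney extension theorem (Corollary \ref{consequence of Whitney ET} or Theorem \ref{WET for functions with Lipschitz derivatives}) with no exceptional set whatsoever; the ingredient actually borrowed from \cite{LiuTai} is not the Lusin property of Theorem \ref{LiuTai} but De Giorgi's lemma, used to upgrade the pointwise Taylor bounds on a stratum $E_j$ to a uniform Whitney estimate valid on all of $E_j$. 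Second, and more seriously, your proposal never handles the points where $T_{n-1}(x;\cdot)-f(x)$ is not identically zero, and deferring to ``the covering argument already developed for Theorem \ref{main theorem}'' is circular: in the paper, Theorem \ref{main theorem} is a corollary of Theorem \ref{main theorem improved}, and the only proof given is the one you are being asked to supply.

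Concretely, the paper splits $E$ into $A=\{x : P_x\equiv 0\}$, $B=\{x: P_x=P_x^{n-1}\not\equiv 0\}$ and the remainder. On $A$ your oscillation bound $|f(x)-f(y)|\le 2k|x-y|^n$ does hold (both one-sided bounds apply with zero polynomials) and the Whitney--Bates route works as you describe. On $B$ no such bound is available; the paper instead shows (Lemmas \ref{n impar} and \ref{n par}) that each small piece of $B$ meets every line in a suitable direction $e$, with $|P^{n-1}_x(e)|$ bounded away from zero, at most once, projects onto $[e]^{\perp}$, and obtains a function on a subset of $\R^{n-1}$ satisfying a H\"older condition of order $n-1$, to which Corollary \ref{consequence of Whitney ET} and Bates's theorem in dimension $n-1$ apply. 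On $E\setminus(A\cup B)$ (nonempty only for $n\ge 4$) the two-sided Taylor expansion only yields $C^{n-2,1}$ Whitney data, one derivative short of Bates's hypothesis in $\R^n$; the paper recovers the loss by stratifying by the first nonvanishing homogeneous term, placing the stratum inside a countable union of $(n-1)$-dimensional $C^{n-2-k,1}_{\textrm{loc}}$ manifolds via the implicit function theorem, and invoking the $C^{r,1}_{\textrm{loc}}$ Kneser--Glaeser theorem (Theorem \ref{Kneser Glaeser}) to land back in Bates's theorem on $\R^{n-1}$. Your symmetric inequality, which entangles $P_x^j$ and $(-1)^jP_y^j$ across all degrees $j$, does not by itself produce the Whitney compatibility conditions for the individual coefficients, so none of these three mechanisms is actually derived in the proposal.
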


Observe that if $f$ has a nonempty subdifferential of order $n$ at every $x\in E$, then $f\in \widetilde{C}^{n-1,1}(E)$. Therefore the above result clearly generalizes Theorem \ref{main theorem}.

The rest of the paper is organized as follows. In Section 2 we will recall the $C^{k-1,1}$ version of the Whitney Extension Theorem (see \cite{Whitney, Glaeser, Stein}), which will be a fundamental tool in all of our proofs, as well as a $C^{k-1,1}_{\textrm{loc}}$ version of the Kneser-Glaeser Rough Composition Theorem, and a theorem of Liu and Tai \cite{LiuTai} connecting Taylor expansions and Lusin properties of order $k$, which will be instrumental in establishing our results for the higher dimensional case ($n\geq 4$). We will use these results in Section 3, where we will provide the proofs of Theorem \ref{main theorem improved}, \ref{theorem for the proximal subdifferential in the plane}, \ref{Theorem for n=3}, and Proposition \ref{Theorem for n=1}. Finally, we include an Appendix which clarifies what is known, or at least relatively easy to get to know, about the Morse-Sard properties of vector-valued functions with Taylor expansions.

\section{Auxiliary results}

One very important tool in our proofs will be the following version of the classical Whitney Extension Theorem for functions of class $C^{m,1}$ (see \cite{Glaeser, Stein} for instance). Recall that $C^{m,1}(\R^n, \R^k)$ denotes the set of $C^{m}$ functions from $\R^n$ to $\R^k$ whose partial derivatives of order $m$ are Lipschitz.

\begin{theorem}\label{WET for functions with Lipschitz derivatives}
Let $C$ be a closed subset of $\R^n$ and $\{ f_\alpha \}_{|\alpha|\leq m}$ be a family of functions defined on $C$ and satisfying
\begin{equation} \label{derivadas de whitney}
f_\alpha(x)= \sum_{|\beta| \leq m-|\alpha|} \frac{f_{\alpha+\beta}(y)}{\beta !} (x-y)^\beta
 + R_\alpha(x,y)
\end{equation}
for all $x,y \in  C$ and all multi-indices $\alpha$ with $|\alpha| \leq m.$ Suppose that for some constant $M>0$ we have
 \begin{equation}\label{condicion whitney}
 |f_\alpha(x)| \leq M, \textrm{ and } \quad |R_\alpha(x,y)|\leq M |x-y|^{m+1-|\alpha|} \quad\ \text{for all} \quad x,y \in C
 \end{equation}
and all $|\alpha| \leq m.$ Then there exists a function $F:\R^n \longrightarrow \R$ such that:
 \begin{itemize}
  \item[(i)] $F\in C^{m,1}(\R^n, \R).$
 \item[(ii)] $D^\alpha F = f_\alpha$ on $C$ for all $|\alpha| \leq m$.
 \end{itemize}
\end{theorem}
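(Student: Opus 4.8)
The plan is to construct the extension $F$ explicitly by the classical Whitney gluing procedure, and then to exploit the fact that the remainder bound \eqref{condicion whitney} has the sharp order $m+1-|\alpha|$ (rather than the weaker $o(|x-y|^{m-|\alpha|})$ that would suffice for a mere $C^m$ extension) in order to promote the top-order derivatives from merely continuous to Lipschitz. First I would fix a Whitney decomposition $\R^n\setminus C=\bigcup_j Q_j$ into dyadic cubes satisfying $\mathrm{diam}(Q_j)\le \mathrm{dist}(Q_j,C)\le 4\,\mathrm{diam}(Q_j)$, together with an associated smooth partition of unity $\{\varphi_j\}$ with $\sum_j\varphi_j\equiv 1$ on $\R^n\setminus C$, with $\mathrm{supp}\,\varphi_j$ contained in a fixed dilate of $Q_j$, and with the standard derivative bounds $|D^\beta\varphi_j(x)|\le A_\beta\,\mathrm{diam}(Q_j)^{-|\beta|}$; I would also record the combinatorial fact that each point of the complement lies in a bounded number (depending only on $n$) of the enlarged supports. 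For each $j$ I would choose a foot point $p_j\in C$ with $|x-p_j|\lesssim \mathrm{dist}(x,C)$ for all $x\in\mathrm{supp}\,\varphi_j$, and write $T_a(x)=\sum_{|\alpha|\le m}\frac{f_\alpha(a)}{\alpha!}(x-a)^\alpha$ for the candidate Taylor polynomial at $a\in C$. The extension is then defined by $F:=f_0$ on $C$ and $F:=\sum_j\varphi_j\,T_{p_j}$ on $\R^n\setminus C$.

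The heart of the argument is a comparison estimate between neighbouring Taylor polynomials. For $a,p\in C$ the difference $T_a-T_p$ is a polynomial of degree $\le m$, and a direct computation from \eqref{derivadas de whitney} shows that $D^\beta(T_a-T_p)(p)=-R_\beta(p,a)$ for every $|\beta|\le m$; expanding $T_a-T_p$ about $p$ and inserting the bound $|R_\beta(p,a)|\le M|p-a|^{m+1-|\beta|}$ of \eqref{condicion whitney}, I would deduce that whenever $|x-p|$ and $|p-a|$ are both of order $d:=\mathrm{dist}(x,C)$ one has $|D^\gamma(T_a-T_p)(x)|\lesssim M\,d^{\,m+1-|\gamma|}$ for all $|\gamma|\le m$. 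This single estimate drives everything. Writing $F-T_a=\sum_j\varphi_j(T_{p_j}-T_a)$ near a point $x$ of the complement (with $a$ a foot point of $x$, so that all contributing $p_j$ and $a$ lie within $O(d)$ of $x$) and applying the Leibniz rule together with the partition-of-unity bounds and the comparison estimate, I would obtain, for every $|\gamma|\le m+1$, the master bound $|D^\gamma(F-T_a)(x)|\lesssim M\,d^{\,m+1-|\gamma|}$ on $\R^n\setminus C$. The exponents match precisely because each factor $D^{\gamma-\beta}\varphi_j\sim d^{-(|\gamma|-|\beta|)}$ is exactly compensated by the factor $d^{\,m+1-|\beta|}$ coming from the comparison estimate, while the sum $\sum_j$ has a bounded number of nonzero terms.

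Two consequences of the master bound deliver the theorem. Taking $|\gamma|\le m$ and letting $x\to C$ shows that each $D^\gamma F$ extends continuously up to $C$ with boundary value $f_\gamma$, which together with the smoothness of $F$ on the open complement gives $F\in C^m(\R^n)$ and $D^\alpha F=f_\alpha$ on $C$, i.e. conclusion (ii) and the $C^m$ part of (i). For the Lipschitz refinement I would fix $|\alpha|=m$ and argue by cases. On $C$ the bound is immediate, since then \eqref{derivadas de whitney} reduces to $f_\alpha(x)-f_\alpha(y)=R_\alpha(x,y)$, so $|f_\alpha(x)-f_\alpha(y)|\le M|x-y|$. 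On the complement the case $|\gamma|=m+1$ of the master bound reads $|D^\gamma F|\lesssim M$, so $D^\alpha F$ is Lipschitz on each connected component of $\R^n\setminus C$ with a uniform constant, while the case $|\gamma|=m$ gives the transition estimate $|D^\alpha F(x)-f_\alpha(a)|\lesssim M\,d$. To assemble a single global Lipschitz constant for $D^\alpha F$, given $x,y\in\R^n$ I would examine the segment $[x,y]$: if it avoids $C$ I integrate the bounded gradient of $D^\alpha F$; otherwise I let $x',y'\in C$ be the first and last points of $C$ on the segment, control $|D^\alpha F(x)-f_\alpha(x')|$ and $|D^\alpha F(y)-f_\alpha(y')|$ by the transition estimate (comparing foot points, using $\mathrm{dist}(x,C)\le|x-x'|\le|x-y|$), and bound $|f_\alpha(x')-f_\alpha(y')|\le M|x'-y'|\le M|x-y|$ by the estimate on $C$.

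I expect the main obstacle to lie in the two quantitative steps above: first, proving the comparison estimate with the sharp exponent $m+1-|\gamma|$, which is exactly where \eqref{derivadas de whitney} and the strengthened remainder bound of \eqref{condicion whitney} must be used in tandem; and second, verifying that the multi-index Leibniz expansion of $D^\gamma(F-T_a)$ telescopes so that the negative powers of $d$ from the partition of unity cancel the positive powers from the comparison estimate, leaving exactly $d^{\,m+1-|\gamma|}$ uniformly. The final assembly of a global Lipschitz constant, although elementary, also demands some care in the case where the segment joining the two points crosses the closed set $C$.
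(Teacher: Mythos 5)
Your proposal is correct, and it is precisely the classical Whitney-cube/partition-of-unity proof of the $C^{m,1}$ extension theorem: the paper does not prove this statement at all but quotes it from the literature (Glaeser, Stein), and your argument is essentially the proof found in those references. The decisive points are all in order: the identity $D^\beta(T_a-T_p)(p)=-R_\beta(p,a)$ follows directly from \eqref{derivadas de whitney}, and combined with the sharp remainder bound of \eqref{condicion whitney} it yields the comparison estimate $|D^\gamma(T_a-T_p)(x)|\leq c\,M\,d^{\,m+1-|\gamma|}$ at scale $d=\mathrm{dist}(x,C)$, whence the master bound for all $|\gamma|\leq m+1$ by Leibniz and bounded overlap (note that since the hypotheses are global, the estimate holds at all scales, so no care is needed far from $C$). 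The only step you gloss is the passage from continuity of the boundary values $f_\gamma$ to actual differentiability of $F$ at points of $C$ (continuity of derivatives on the complement plus boundary values does not by itself give $D^\alpha F(a)=f_\alpha(a)$ at $a\in C$); this is repaired in the standard way by exactly the estimates you already have, combining the master bound off $C$ with \eqref{derivadas de whitney} on $C$ and splitting segments at their crossings with $C$ as in your final Lipschitz assembly, so it is a presentational rather than a mathematical gap.
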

As a matter of fact this version of  the Whitney extension theorem also holds for arbitrary sets $C$, because an obvious modification of the usual argument showing that a Lipschitz function defined on a set $D$ has a unique Lipschitz extension to the closure $\overline{D}$ of $D$, together with conditions \eqref{derivadas de whitney} and \eqref{condicion whitney}, easily imply that if $C$ is not closed then the functions $f_{\alpha}$ have unique extensions to $\overline{C}$ that also satisfy \eqref{derivadas de whitney} and \eqref{condicion whitney} on $\overline{C}$. Bearing this in mind, considering the particular case in which we have $f_{\alpha}=0$ for $|\alpha|\geq 1$, and applying the corresponding extension result to each coordinate function $f^{j}$ of a vector-valued function $f=(f^{1}, ..., f^{k})$ from a subset of $\R^n$ to $\R^k$ we immediately obtain the following.

\begin{cor}\label{consequence of Whitney ET}
Let $C$ be a (not necessarily closed) subset of $\R^n$ and $f:C\to\R^k$ be a function such that for some constant $M>0$ we have
$$
 |f(x)| \leq M, \textrm{ and } \quad |f(x)-f(y)|\leq M |x-y|^{m+1} \quad\ \text{for all} \quad x,y \in C.
$$
Then there exists a function $F\in C^{m,1}(\R^n, \R^k)$ such that $F=f$ on $C$ and $D^\alpha F = 0$ on $C$ for all $1\leq |\alpha| \leq m$.
\end{cor}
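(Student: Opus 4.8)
The plan is to derive the statement directly from Theorem \ref{WET for functions with Lipschitz derivatives} after two routine reductions. First I would reduce to the scalar case $k=1$: writing $f=(f^1,\dots,f^k)$, each component satisfies $|f^j(x)|\le|f(x)|\le M$ and $|f^j(x)-f^j(y)|\le|f(x)-f(y)|\le M|x-y|^{m+1}$, and a map into $\R^k$ is of class $C^{m,1}$ with the prescribed derivatives vanishing on $C$ precisely when each of its coordinates is. Hence it suffices to produce, for each $j$, a scalar extension $F^j\in C^{m,1}(\R^n,\R)$ with $F^j=f^j$ and $D^\alpha F^j=0$ on $C$ for $1\le|\alpha|\le m$, and then set $F=(F^1,\dots,F^k)$.

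Second, I would reduce to the case where $C$ is closed. The bound $|f(x)-f(y)|\le M|x-y|^{m+1}$ with $m+1\ge1$ forces $f$ to be uniformly continuous on $C$, so it admits a unique continuous extension to $\overline{C}$; passing to limits, this extension still satisfies $|f|\le M$ and the same order-$(m+1)$ Hölder bound on $\overline{C}$. Since $\overline{C}\supseteq C$, it is enough to prove the statement for the closed set $\overline{C}$ together with the extended $f$, and afterwards restrict the resulting $F$ to $C$.

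With $C$ closed and $f$ scalar, I would apply Theorem \ref{WET for functions with Lipschitz derivatives} to the Whitney data defined by $f_0:=f$ and $f_\alpha\equiv0$ for all $1\le|\alpha|\le m$. The compatibility identities \eqref{derivadas de whitney} then become trivial: for $|\alpha|\ge1$ every term $f_{\alpha+\beta}(y)$ vanishes (since $|\alpha+\beta|\ge1$), forcing $R_\alpha\equiv0$, while for $\alpha=0$ only the $\beta=0$ term survives, so that $R_0(x,y)=f(x)-f(y)$. The uniform bounds \eqref{condicion whitney} are immediate: $|f_\alpha|\le M$ holds because $|f|\le M$ and $0\le M$, and $|R_0(x,y)|=|f(x)-f(y)|\le M|x-y|^{m+1}$ is exactly the hypothesis, whereas $R_\alpha\equiv0$ for $|\alpha|\ge1$ needs no estimate. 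Theorem \ref{WET for functions with Lipschitz derivatives} now yields $F\in C^{m,1}(\R^n,\R)$ with $D^\alpha F=f_\alpha$ on $C$, i.e.\ $F=f$ and $D^\alpha F=0$ for $1\le|\alpha|\le m$, as required.

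The computation in the last step is entirely mechanical, so the only place deserving genuine care — and the step I expect to be the real obstacle — is the closure reduction: one must verify that the unique continuous extension of $f$ to $\overline{C}$ genuinely inherits both the sup bound and the order-$(m+1)$ Hölder bound, which is precisely what allows the Whitney data to be set up on a closed set without loss of generality. Everything else is a direct substitution into the hypotheses of the extension theorem.
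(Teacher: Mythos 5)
Your proposal is correct and follows essentially the same route as the paper: the authors likewise pass to $\overline{C}$ via the unique continuous extension preserving the bounds, take the Whitney data $f_{0}=f$ and $f_{\alpha}=0$ for $1\leq|\alpha|\leq m$, and apply Theorem \ref{WET for functions with Lipschitz derivatives} coordinatewise. Your verification of the compatibility identities and remainder estimates matches the intended (and in the paper only sketched) argument.
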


\medskip

We will also need to use a $C^{k,1}_{\textrm{loc}}$ version of the Kneser-Glaeser Theorem.
Recall that the usual Kneser-Glaeser Theorem (see \cite{AbrahamRobbin} or \cite[II.6.1]{Malgrange} for instance), whose proof relies on an application of the classical Whitney Extension Theorem, tells us that a composition of the form $f\circ g$, with $f$ of class $C^r$ and $g$ of less smoothness $C^{r-s}$, can be extended from a set $C$ to a function of class $C^r$ provided that the derivatives of $g$ up to the order $s$-th vanish on $C$. This kind of result also holds true for the classes $C^{k,1}_{\textrm{loc}}$. Recall that a function $f$ belongs to $C^{k, 1}_{\textrm{loc}}$ provided $f$ is $k$ times continuously differentiable and the partial derivatives $D^{\alpha}f$ are locally Lipschitz for all multi-indices $\alpha$ of order $k$ (or equivalently for all multi-indices $\alpha$ with $|\alpha|\leq k$).

\begin{theorem}[Kneser-Glaeser]\label{Kneser Glaeser}
Let $W\subset\R^m$ and $V\subset\R^n$ be open sets; $A^{*}\subset W$ and
$A\subset V$, with $A$ closed relative to $V$, $f:V\to\R^p$ of class $C^{r, 1}_{\textrm{loc}}$ on $V$ and $s$-flat on $A$, $g:W\to V$ of class $C^{r-s, 1}_{\textrm{loc}}$ with $g(A^{*})\subset A$. Then there is a map $H:W\to\R^{p}$ of class $C^{r, 1}_{\textrm{loc}}$ satisfying:
\begin{enumerate}
\item $H(x)=f(g(x))$ for $x\in A^{*}$;
\item $H$ is $s$-flat on $A^{*}$.
\end{enumerate}
\end{theorem}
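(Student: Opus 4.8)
The plan is to adapt the classical proof of the Kneser--Glaeser theorem (as in \cite{AbrahamRobbin} or \cite{Malgrange}), replacing the classical Whitney Extension Theorem by its $C^{r,1}$ counterpart, Theorem \ref{WET for functions with Lipschitz derivatives}, and sharpening every Taylor estimate so as to produce remainders of order $|x-y|^{r+1-|\beta|}$ rather than the $o(|x-y|^{r-|\beta|})$ that suffices in the merely $C^r$ case. Since $C^{r,1}_{\textrm{loc}}$ is a local class, I would first reduce to a situation with uniform bounds: cover $W$ by relatively compact open sets, prove the statement with a genuine $C^{r,1}$ extension on each piece (where $f$, $g$ and their derivatives are bounded and Lipschitz with uniform constants on the relevant compacta), and glue the local extensions $H_i$ by a smooth partition of unity $\{\phi_i\}$, setting $H=\sum_i \phi_i H_i$. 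Because every $H_i$ carries the same prescribed jet on $A^{*}$, a Leibniz computation using $\sum_i D^\delta\phi_i=0$ for $\delta\neq 0$ shows that $H$ has exactly that jet on $A^{*}$; hence it suffices to produce the local extensions.

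For the local step I would prescribe the candidate derivatives of $H$ on $A^{*}$ by the formal chain rule (Fa\`a di Bruno): for $|\beta|\leq r$, write $H_\beta(x)$ as the sum over the usual partitions of a factor $D^\alpha f(g(x))$ times a product of derivatives $D^{\gamma_j}g(x)$, where $|\alpha|$ equals the number of factors and the orders $|\gamma_j|\geq 1$ sum to $|\beta|$. The crucial observation is a bookkeeping of orders: in such a term the largest order of a single factor $D^{\gamma_j}g$ is at most $|\beta|-|\alpha|+1$. The terms with $|\alpha|\leq s$ vanish identically on $A^{*}$, since $g(A^{*})\subset A$ and $f$ is $s$-flat on $A$, so I simply discard them; for the surviving terms $|\alpha|\geq s+1$ one has $|\beta|-|\alpha|+1\leq |\beta|-s\leq r-s$, so every factor $D^{\gamma_j}g$ genuinely exists and (when $|\beta|=r$) its top instances are Lipschitz because $g\in C^{r-s,1}_{\textrm{loc}}$. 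This is precisely why the hypothesis $g\in C^{r-s,1}_{\textrm{loc}}$, together with the $s$-flatness of $f$, makes the formal $r$-jet of $f\circ g$ well defined on $A^{*}$.

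The main obstacle is verifying that this prescribed jet satisfies the Whitney compatibility conditions \eqref{derivadas de whitney}--\eqref{condicion whitney} with the sharp bound $|R_\beta(x,y)|\leq M|x-y|^{r+1-|\beta|}$. For $\beta=0$ the strategy is to expand $f$ about $g(y)$ to order $r$ --- the $C^{r,1}$ regularity of $f$ giving a remainder $O(|g(x)-g(y)|^{r+1})=O(|x-y|^{r+1})$ since $g$ is Lipschitz --- and then to substitute for $g(x)-g(y)$ its Taylor polynomial of order $r-s$ about $y$, whose remainder is $O(|x-y|^{r-s+1})$ because $g\in C^{r-s,1}_{\textrm{loc}}$. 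Collecting powers of $(x-y)$ reproduces $\sum_{|\delta|\leq r}\frac{H_\delta(y)}{\delta!}(x-y)^\delta$, while the errors are controlled by the same order count: the flatness of $f$ removes the terms with $|\alpha|\leq s$, and for $|\alpha|\geq s+1$ the error incurred by replacing $(g(x)-g(y))^\alpha$ with the corresponding power of the Taylor polynomial of $g$ is $O(|x-y|^{|\alpha|-1})\cdot O(|x-y|^{r-s+1})=O(|x-y|^{|\alpha|+r-s})=O(|x-y|^{r+1})$. The conditions for general $\beta$ follow from the same expansion applied to the data $H_\beta$, the combinatorics being identical once the low-order ($|\alpha|\leq s$) terms have been discarded.

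Finally, with the Whitney conditions and a uniform constant $M$ in hand on each compact piece, Theorem \ref{WET for functions with Lipschitz derivatives} (applied coordinatewise to the $\R^p$-valued data, and using the remark that it holds for not-necessarily-closed sets since the data extend to $\overline{A^{*}}$, which $g$ still maps into $A$ because $A$ is closed in $V$ and $g$ is continuous) yields a local extension $H_i\in C^{r,1}$ with $D^\beta H_i=H_\beta$ on $A^{*}$. Gluing as above produces $H\in C^{r,1}_{\textrm{loc}}(W,\R^p)$ with $H=f\circ g$ on $A^{*}$; and since every term defining $H_\beta$ with $|\beta|\leq s$ has $|\alpha|\leq|\beta|\leq s$ and therefore vanishes on $A^{*}$, we obtain $D^\beta H=0$ there for all $|\beta|\leq s$, i.e. $H$ is $s$-flat on $A^{*}$.
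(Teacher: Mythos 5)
Your proposal is correct and follows essentially the same route as the paper: you prescribe the formal jet of $f\circ g$ on $A^{*}$ via the chain rule, use the $s$-flatness of $f$ to discard the terms with $|\alpha|\leq s$ so that only derivatives of $g$ of order at most $r-s$ appear, sharpen the Taylor remainders to $O(|x-y|^{r+1-|\beta|})$ using the Lipschitz continuity of $D^rf$ and $D^{r-s}g$, and then invoke the $C^{r,1}$ Whitney extension theorem, exactly as in the paper's adaptation of the Abraham--Robbin argument. The only cosmetic difference is that you spell out the partition-of-unity gluing that the paper dismisses as standard.
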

%This result must of course be known, or at least be fairly obvious to all mathematicians well acquainted with the proof of the standard version of the Kneser-Glaeser theorem. However, because we have been unable to find an explicit reference,  we will provide a sketch of the proof for the readers' convenience.

\noindent {\em Sketch of proof}. We will follow the proof of the classical version of the Kneser-Glaeser theorem that appears in 
\cite[pages 35-37]{AbrahamRobbin}, explaining what small additions we need to make in order to obtain the present version. We may assume that $A^{*}$ and $A$ are compact (the general case follows from this particular situation via standard arguments with partitions of unity). Our starting point is the Composite Mapping Formula: if
suppose for the moment that $g$ is $C^r$, then we would have
$$
D^j(f\circ g)(x)=\sum_{q=1}^j
\sum_{i_1,\dots ,i_q}
\sigma_j(i_1,\dots ,i_q)D^qf(g(x))\circ
\bigl( D^{i_1}g(x),\dots , D^{i_q}g(x) \bigr)
$$
for every $j\leq r$, where $i_1,\dots ,i_q$ are positive integers satisfying $i_1+\dots +i_q=j$. 
If $x\in A^*$, then $g(x)\in A$ and we have that $D^q f(g(x))=0$ provided that $q\leq s$, since $f$ is $s$-flat.
Hence the sum runs from $q=s+1$ to $j$, and $i_1,\dots ,i_q\leq r-s$ necessarily. This implies that 
$ D^{i_1}g(x),\dots , D^{i_q}g(x)$, and consequently
$$
D^qf(g(x))\circ \bigl( D^{i_1}g(x),\dots , D^{i_q}g(x) \bigr)
$$
are well defined. This allows us to define, if $x\in A^*$, $h_0(x)=(f\circ g)(x)$, $h_k(x)=0$ if $k\leq s$ and 
$$
h_k(x)=\sum_{q=s+1}^k
\sum_{i_1,\dots ,i_q}
\sigma_j(i_1,\dots ,i_q)D^qf(g(x))\circ
\bigl( D^{i_1}g(x),\dots , D^{i_q}g(x) \bigr)
$$
if $s+1\leq k\leq r$.

Now, for $k<r$, the Taylor Formula
with integral remainder
$$
F(x)=\sum_{i=0}^m\frac{1}{i!}D^{i}F(y)(x-y)^{i}+
$$
$$
+\Bigl( \int_0^1\frac{(1-t)^{m-1}}{(m-1)!}[D^{m}F(y+t(x-y))-D^{m}F(y)]dt\Bigr) (x-y)^{m}=
$$
$$
:=\sum_{i=0}^m\frac{1}{i!}D^{i}F(y)(x-y)^{i}+\mathcal{R}_F^m(x,y)(x-y)^m,
$$
applied to $f$, $g$ and their derivatives,  
allows one to deduce the following Taylor-like formula for the functions $h_k$:
$$
h_k(x)=\sum_{j=0}^{r-k}\frac{1}{j!}h_{k+j}(y)(x-y)^j+R_k(x,y)
$$
(see \cite[pages 35-37]{AbrahamRobbin} for details),
where $R_k(x,y)$ is a sum of terms of the 
forms: 
\begin{enumerate}
  \item $A(x-y)^j$, with  $A$ $j$-linear, $j>r-k$.
  \item $\mathcal{R}_{D^qf}^m(g(x),g(y))(g(x)-g(y))^m\bigl( \dots \bigr)$, $m=r-q$.
  \item $D^qf(g(x))\bigl( \dots ,\mathcal{R}_{D^{i_l}g}^{r-s-i_l}(x,y)(x-y)^{r-s-i_l},\dots \bigr)$, $i_l\leq k-q+1$
\end{enumerate}
The terms of type $(1)$ are obviously $O(|x-y|^{r-k+1})$:
$$
|A(x-y)^j|\leq ||A|||y-x|^j\leq ||A|||x-y|^{r-k+1}.
$$
The terms of type $(2)$ satisfy
$$
\Bigl| \mathcal{R}_{D^qf}^m(g(x),g(y))(g(x)-g(y))^m\bigl( \dots \bigr) \Bigr| \leq 
$$
$$
\leq K_0\Bigl| \Bigl| \mathcal{R}_{D^qf}^m(g(x),g(y))(g(x)-g(y))^m \Bigr| \Bigr|
$$
(because the hidden arguments are uniformly bounded, recall that $g$ is $C^{r-s}$ and $A^{*}$ is compact), which we can join with the inequalities  
$$
K_0\Bigl| \Bigl| \mathcal{R}_{D^qf}^m(g(x),g(y))(g(x)-g(y))^m \Bigr| \Bigr|  \leq 
$$
$$
\leq K_0\int_0^1\frac{(1-t)^{m-1}}{(m-1)!} \bigl| \bigl|
D^r f(g(y)+t(g(x)-g(y)))-D^r fg((y))\bigr| \bigr| dt |x-y|^{r-q}\leq
$$
$$
\leq K_0\int_0^1\frac{(1-t)^{m-1}}{(m-1)!} tK_1|g(x)-g(y)| dt |x-y|^{r-q}\leq
$$
$$
\leq K_0\int_0^1\frac{(1-t)^{m-1}}{(m-1)!} tK_1K_2|x-y| dt |x-y|^{r-q}\leq 
$$
$$
\leq K|x-y|^{r-q+1}\leq K|x-y|^{r-k+1}
$$
since $D^rf$ is $K_1$-Lipschitz for some constant $K_1$, $g$ s $K_2$-Lipschitz for some constant $K_2$, and $q\leq k$.

Finally, for the terms of type $(3)$, we have
$$
\Bigl| D^qf(g(x))\bigl( \dots ,\mathcal{R}_{D^{i_l}g}^{r-s-i_l}(x,y)(x-y)^{r-s-i_l},\dots \bigr) \Bigr| \leq
$$
$$
\tilde{K}_0 \Bigl| \Bigl| \mathcal{R}_{D^{i_l}g}^{r-s-i_l}(x,y)(x-y)^{r-s-i_l} \Bigr| \Bigr| \leq
$$
$$
\leq \tilde{K}_0\Bigl( \int_0^1\frac{(1-t)^{r-s-i_l-1}}{(r-s-i_l-1)!}t\tilde{K}|x-y|dt\Bigr) |x-y|^{r-s-i_l}=
$$
$$
=K |x-y|^{r-s-i_l+1}\leq K|x-y|^{r-k+1}
$$
since $D^{r-s}g$ is $\tilde{K}$-Lipschitz for some constant $\tilde{K}$, and $s+i_l\leq k$. Here $\tilde{K}_0$ is an uniform bound
for $D^qf(g(x))$, and the derivatives and integral remainders of $g$, near $x$).

Summing up, we have 
$$
R_k(x,y)=O\left(|x-y|^{r-k+1}\right) \textrm{ for all } x, y\in A^{*},
$$
and consequently by 
Theorem \ref{WET for functions with Lipschitz derivatives} there exists 
a $C^{r,1}$ function $H:\mathbb{R}^m\to \mathbb{R}^p$, such that $D^kH(x)=h_k(x)$ for every $x\in A^*$. In
particular $H(x)=f(g(x))$ for every $x\in A^*$, and $H$ is $s$-flat on $A^*$.
\qed

\medskip

In the proof of Theorem \ref{main theorem improved}, in order to deal with a case which will only be present in dimensions $n\geq 4$ (see Lemma \ref{Derivadas no nulas} below), we will need to combine the preceding version of the Kneser-Glaeser theorem with the following important result of Liu and Tai about Taylor polynomials and Lusin properties of order $k$.

\begin{theorem}[Liu-Tai, see \cite{LiuTai}]\label{LiuTai}
For a measurable function $u$ defined on a measurable set $D$ of $\R^n$, the following statements are equivalent:
\begin{enumerate}
\item $u$ has the Lusin property of order $k$ on $D$.
\item $u$ has an approximate $(k-1)$-Taylor polynomial at almost every point of $D$.
\item $u$ is approximately differentiable of order $k$ at almost every point of $D$.
\end{enumerate}
\end{theorem}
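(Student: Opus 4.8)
The plan is to prove the three statements equivalent by running the cycle $(1)\Rightarrow(3)\Rightarrow(2)\Rightarrow(1)$, with the last arrow carrying essentially all of the difficulty (and being, incidentally, the form in which the theorem is used in this paper). Recall the relevant notions: $u$ is \emph{approximately differentiable of order $k$} at $x$ if there is a polynomial $P_x$ of degree $\leq k$ with $\operatorname{ap\,lim}_{y\to x}(u(y)-P_x(y))/|y-x|^{k}=0$; $u$ has an \emph{approximate $(k-1)$-Taylor polynomial} at $x$ if the same holds with a polynomial $Q_x$ of degree $\leq k-1$ and exponent $k-1$; and $u$ has the \emph{Lusin property of order $k$} on $D$ if for every $\varepsilon>0$ there is $g\in C^{k}(\R^n)$ with $\mathcal{L}^n(\{x\in D:u(x)\neq g(x)\})<\varepsilon$, equivalently if $D$ splits, up to a null set, into countably many measurable pieces on each of which $u$ agrees with a $C^{k}$ function.

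The two forward arrows are the soft ones. For $(1)\Rightarrow(3)$ I would write $D=N\cup\bigcup_i K_i$ with $N$ null and $u|_{K_i}=g_i|_{K_i}$, $g_i\in C^{k}$; at almost every $x\in K_i$ the complement $K_i^{c}\supset\{y:u(y)\neq g_i(y)\}$ has density $0$, so the classical degree-$k$ Taylor polynomial of $g_i$ at $x$ is the sought approximate polynomial of $u$, whence $u$ is approximately differentiable of order $k$ almost everywhere. The step $(3)\Rightarrow(2)$ is immediate: truncating $P_x$ to degree $\leq k-1$ leaves a remainder that is $O(|y-x|^{k})=o(|y-x|^{k-1})$ in the approximate sense.

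The substance is $(2)\Rightarrow(1)$. Let $a_\alpha(x)$, $|\alpha|\leq k-1$, be the (measurable) coefficient functions of the polynomials $Q_x$. Applying Egorov's and Lusin's theorems to these coefficients and to the quantity governing the approximate limit, I would discard a null set and decompose the rest of $D$ into countably many bounded measurable sets $E_m$ on each of which the $a_\alpha$ are bounded and relatively continuous and, crucially, the convergence in $(2)$ is uniform in the quantitative form that for each $\varepsilon>0$ there is $r_0>0$ with $\mathcal{L}^n(\{y\in B(x,r):|u(y)-Q_x(y)|>\varepsilon\,r^{k-1}\})\leq\varepsilon\,\mathcal{L}^n(B(x,r))$ for all $x\in E_m$ and $r<r_0$. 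It then suffices to prove the Lusin property on each $E_m$, and for that one must verify the Whitney compatibility hypotheses of Theorem \ref{WET for functions with Lipschitz derivatives} (with $m=k-1$) for the field $\{a_\alpha\}$ at the common density points of $E_m$; the extension theorem then produces $g\in C^{k-1,1}$ agreeing with $u$ on a full-density portion of $E_m$, and summing over $m$ yields the desired covering of $D$.

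The heart of the matter, and the step I expect to be the main obstacle, is verifying Whitney compatibility from purely pointwise approximate data. The mechanism is a density/polynomial argument: for nearby density points $x,x'\in E_m$ with $\rho=|x-x'|$ and $r\approx\rho$, the uniform control guarantees a set of $y\in B(x,r)\cap B(x',r)$ of positive relative measure on which both $|u(y)-Q_x(y)|\leq\varepsilon r^{k-1}$ and $|u(y)-Q_{x'}(y)|\leq\varepsilon r^{k-1}$; there $Q_x-Q_{x'}$ is small, and since a polynomial of bounded degree that is small on a set of positive relative density in a ball has all its coefficients controlled (a Remez-type reverse inequality on the finite-dimensional space of such polynomials), one reads off that the discrepancies $R_\alpha(x,x')$ between $a_\alpha(x)$ and the value predicted by the jet at $x'$ are small. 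The genuinely delicate point is that this naive comparison controls these remainders only up to the top order visible in $(2)$, whereas the $C^{k-1,1}$ conclusion needs one further power of $\rho$; the theorem thus hinges on the almost-everywhere hypothesis self-improving by one order, which I would extract by showing that the top-order coefficient functions $a_\alpha$, $|\alpha|=k-1$, are themselves approximately differentiable almost everywhere, folding this extra order-$k$ datum into the Whitney jet before invoking Theorem \ref{WET for functions with Lipschitz derivatives}. Finally, since $C^{k-1,1}$ functions enjoy the Lusin property of order $k$ (they coincide with $C^{k}$ functions off arbitrarily small sets), this upgrades to the genuine Lusin property of order $k$ and closes the cycle.
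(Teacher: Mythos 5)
First, a point of reference: the paper does not actually prove Theorem \ref{LiuTai}; it is imported wholesale from \cite{LiuTai}. The closest thing to a proof in the text is the argument of Lemma \ref{Derivadas no nulas} and of the Appendix, which reproduce the core of Liu and Tai's method: decomposition into sets $E_j$ on which the Taylor-type bound is uniform, De Giorgi's Lemma \ref{De Giorgi} applied to the polynomial $Q_x-Q_{x'}$ on $B(x,|x-x'|)\cap B(x',|x-x'|)$ to extract Whitney compatibility, and the $C^{k-1,1}$ Whitney extension Theorem \ref{WET for functions with Lipschitz derivatives}. Your overall architecture --- the cycle $(1)\Rightarrow(3)\Rightarrow(2)\Rightarrow(1)$, the Egorov/Lusin reduction to sets with uniform control, the Remez/De Giorgi-type reverse inequality, then Whitney extension --- is exactly that method, and your two soft implications are handled correctly.

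The genuine gap is that you have misstated condition (2), and with your version the theorem is false. In the paper (and in \cite{LiuTai}) an approximate $(k-1)$-Taylor polynomial at $x$ is a polynomial $Q_x$ of degree at most $k-1$ with $\mathrm{aplimsup}_{y\to x}|u(y)-Q_x(y)|/|y-x|^{k}<\infty$: the exponent is $k$, not $k-1$, and the requirement is a finite limsup, not a vanishing limit. Your version (degree $\le k-1$, exponent $k-1$, approximate limit $0$) is just approximate differentiability of order $k-1$, which does not imply the Lusin property of order $k$: take $u$ of class $C^{k-1}$ whose derivative $D^{k-1}u$ is nowhere approximately differentiable; then $u$ has genuine $(k-1)$-Taylor expansions with $o(|y-x|^{k-1})$ remainder at every point, yet cannot coincide with a $C^{k}$ function on any set of positive measure (at density points of such a set all derivatives up to order $k-1$ of the two functions would agree, forcing $D^{k-1}u$ to be approximately differentiable there). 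This is precisely why your comparison argument comes out ``one power of $\rho$ short.'' The missing power must come from the hypothesis: with the correct exponent $k$ one gets $|Q_x(y)-Q_{x'}(y)|\le 2j|x-x'|^{k}$ on a set of positive relative measure in $B(x,|x-x'|)\cap B(x',|x-x'|)$, hence $|D^{\alpha}(Q_x-Q_{x'})(x')|\le Cj|x-x'|^{k-|\alpha|}$ by De Giorgi's Lemma, which is exactly the $C^{k-1,1}$ Whitney condition. It cannot be manufactured by your proposed ``self-improvement'' of the top-order coefficients $a_\alpha$, $|\alpha|=k-1$: the example above shows these need not be approximately differentiable anywhere under your hypothesis. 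With the correct definition no self-improvement is needed, and the only step you leave implicit --- that a $C^{k-1,1}$ function has the Lusin property of order $k$ --- follows from Rademacher's theorem applied to $D^{k-1}g$ together with an Egorov argument and the classical $C^{k}$ Whitney extension theorem.
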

Recall that $u$ is said to have the Lusin property of order $k$ provided that for every $\varepsilon>0$ there exists a function $g\in C^{k}(\R^n)$ such that $$\mathcal{L}^{n}\left(\{x\in D \, : \, u(x)\neq g(x)\}\right)<\varepsilon.$$ 
Also recall that $\textrm{aplim}_{y\to x} v(y)=\alpha$ means that the set $\{y\in D : |v(y)-\alpha|\leq\varepsilon\}$ has density one at $x$ for every $\varepsilon>0$, and that $\textrm{aplimsup}_{y\to x}v(y)$ is the infimum of all $\beta\in\R$ such that the set $\{y\in D: u(y)>\beta\}$ has density zero at $x$.
Then one says  that $u$ has an approximate Taylor polynomial of order $k-1$ at $x$ if there exists a polynomial $p(x; y)$ of order at most $k-1$ such that 
$$
\textrm{aplimsup}_{y\to x}\frac{|u(y)-p(x ; y)|}{|y-x|^{k}}<\infty.
$$
Similarly, $u$ is said to be approximately differentiable of order $k$ at $x$ provided there exists a polynomial $p(x; y)$ of order at most $k$ such that
$$
\textrm{aplim}_{y\to x}\frac{|u(y)-p(x ; y)|}{|y-x|^{k}}=0.
$$
Observe that, according to the definitions above, if $\partial^{n}f(x)\neq\emptyset$ then $f$ is approximately differentiable of order $n-1$ at $x$, and in particular $f$ has an approximate $(n-2)$-Taylor polynomial (but not necessarily an approximate $(n-1)$-Taylor polynomial) at $x$.

\section{Proofs of the main results}

Let us start by giving the easy {\bf Proof of Proposition \ref{Theorem for n=1}.}
We may assume that $C_f\subset I$, where $I$ is an interval of length $1$.
Given $\varepsilon >0$, we define for every $j$ the closed set
$$
D_j=\{ x\in \R: f(x+t)\geq f(x)-\varepsilon |t|, \ \hbox{for every} \ |t|<\frac{1}{j}\}.
$$
The sequence $\{ D_j\}$ is increasing and satisfies $C_f\subset \cup_jD_j$ since
$$
\liminf_{t\to 0}\frac{f(x+t)-f(x)}{|t|}\geq 0
$$
for every $x\in C_f$.

We split the interval $I$ into $j$ intervals $I_k$ of length $\frac{1}{j}$. For every $x,y\in D_j\cap I_k$
we have
$$
|f(x)-f(y)|\leq \varepsilon |x-y|\leq \frac{\varepsilon}{j},
$$
hence
$$
\mathcal{L}^{1} \bigl( f(D_j\cap I_k)\bigr) \leq \frac{\varepsilon}{j},
$$
and consequently $\mathcal{L}^{1} \bigl( f(D_j)\bigr) \leq \varepsilon$.
We deduce that
$$
\mathcal{L}^{1}\bigl( f(C_f)\bigr) \leq \mathcal{L}^{1} \bigl( \cup_j f(D_j)\bigr) =\lim_j\mathcal{L}^{1} \bigl( f(D_j)\bigr) \leq \varepsilon ,
$$
and therefore $\mathcal{L}^{1} \bigl( f(C_f)\bigr) =0$. \,\,\, $\Box$

\medskip

Let us now proceed with the {\bf Proof of Theorem \ref{main theorem improved}.} For every $x\in E$  we will also denote $T_{n-1}(x;\cdot)$, the Taylor expansion of order $n-1$ of the function $f$ at $x$,
by $f(x)+P_x$, with $P_x=P_x^1+\dots P_x^{n-1}$, where $P_x^k$ is the $k$-homogeneous part of $P_x$. Recall that, by assumption, $Df(x)=0$ for every $x\in E$. We consider the following decomposition
$$
E=A\cup B\cup \bigl( E\setminus (B\cup A)\bigr),
$$
where
$$
A=\{ x\in E: P_x\equiv 0\}, \ B=\{ x\in E: P_x=P_x^{n-1}\not\equiv 0\}.
$$

Our first goal is proving the following.

\begin{lemma}\label{polinomio de taylor 0}
We have $\mathcal{L}^{1} \bigl( f(A)\bigr) =0$.
\end{lemma}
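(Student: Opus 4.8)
The plan is to cover $A$ by countably many pieces on each of which $f$ obeys a two-sided, order-$n$ H\"older estimate, to extend $f$ on each piece to a global $C^{n-1,1}$ function whose critical set contains that piece, and then to invoke the Morse--Sard theorem for the class $C^{n-1,1}$ (Bates's theorem \cite{Bates}). First I would read off what the hypothesis $f\in\widetilde{C}^{n-1,1}(E)$ gives on $A$. If $x\in A$ then $P_x\equiv 0$, so $T_{n-1}(x;y)=f(x)$ and condition \eqref{definition of classes C tilde E} reduces to $\liminf_{y\to x}\bigl(f(y)-f(x)\bigr)/|x-y|^n>-\infty$. Hence there exist $M_x,r_x>0$ with
$$
f(y)-f(x)\geq -M_x|x-y|^n \quad\text{whenever } |x-y|<r_x.
$$

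This is only a one-sided bound, so the key idea is to restore symmetry by decomposing $A=\bigcup_j A_j$, where
$$
A_j=\{x\in A: f(y)-f(x)\geq -j|x-y|^n \text{ whenever } |x-y|<1/j\}.
$$
Taking any $j\geq\max\{M_x,1/r_x\}$ shows $x\in A_j$, so the $A_j$ are increasing and cover $A$. The gain is that for $x,y\in A_j$ with $|x-y|<1/j$, applying the defining inequality at both $x$ and at $y$ yields $|f(x)-f(y)|\leq j|x-y|^n$. Fixing $j$, I would then split $\R^n$ into a countable grid of half-open cubes $Q$ of diameter $<1/j$; on each nonempty piece $E_{j,Q}:=A_j\cap Q$ the estimate $|f(x)-f(y)|\leq j|x-y|^n$ now holds for \emph{all} pairs, and $f$ is bounded on $E_{j,Q}$ (compare each value with that at a fixed base point of the piece).

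With $m=n-1$, so that $m+1=n$, Corollary \ref{consequence of Whitney ET} then supplies a function $F\in C^{n-1,1}(\R^n,\R)$ with $F=f$ on $E_{j,Q}$ and $D^\alpha F=0$ on $E_{j,Q}$ for all $1\leq|\alpha|\leq n-1$; in particular $DF=0$ on $E_{j,Q}$, so $E_{j,Q}$ lies in the critical set of $F$. Since $F\in C^{n-1,1}(\R^n,\R)$ and here $k=n-m+1=n$, Bates's theorem gives $\mathcal{L}^1\bigl(F(\{x\in\R^n:DF(x)=0\})\bigr)=0$. Because $f=F$ on $E_{j,Q}\subseteq\{DF=0\}$, we obtain $\mathcal{L}^1\bigl(f(E_{j,Q})\bigr)=0$, and summing over the countably many cubes $Q$ and the countably many indices $j$ yields $\mathcal{L}^1\bigl(f(A)\bigr)=0$.

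The only delicate point is the passage from the one-sided $\liminf$ hypothesis to a genuine two-sided order-$n$ estimate valid for all pairs within a single piece. A naive single covering of $A$ does not suffice (it produces a bound independent of the mesh), which is precisely why the argument cannot avoid a Morse--Sard input; the double decomposition, first by the uniform constant $j$ and then by small cubes, is designed exactly to put $f$ into the hypotheses of the Whitney extension result, after which Bates's theorem does the rest.
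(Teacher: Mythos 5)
Your proposal is correct and follows essentially the same route as the paper: decompose $A$ into countably many pieces on which a uniform one-sided order-$n$ bound holds at a uniform scale, localize to cubes of small diameter to get the two-sided estimate $|f(x)-f(y)|\leq j|x-y|^n$ for all pairs, extend via Corollary \ref{consequence of Whitney ET} to a $C^{n-1,1}$ function that is flat on the piece, and finish with Bates's theorem. The only cosmetic difference is that the paper uses a two-index decomposition (first by the constant $m$ in the $\liminf$, then by the scale $1/k$) where you merge both into a single index $j$; the substance is identical.
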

\begin{proof}
It is clear that
$$
A=\bigcup_{m=0}^{\infty}A_m,
$$
where
$$
A_m=\{ x\in A: \liminf_{y\to x}\frac{f(y)-T_{n-1}(x;y)}{|x-y|^{n}}\geq -m\}.
$$
Therefore it is enough to prove that $\mathcal{L}^{1} (f(A_m))=0$.
We denote
$$
A_k^m=\{ x\in A: f(x+h)\geq f(x)-(m+1)|h|^n \ \hbox{if} \ |h|<\frac{1}{k} \} \cap \bar{B}(0,k).
$$
It is clear that
$$
A_m\subset \bigcup_{k=1}^{\infty}A_k^m.
$$
Hence it is enough to prove $\mathcal{L}^{1}\bigl( f(A^m_k)\bigr) =0$. We cover $A^m_k$ by a countable collection of closed cubes $Q$ of diameter less or equal
than $\frac{1}{k}$. Let us denote $D=Q\cap A^m_k$ in the rest of the argument. Our aim is to prove that $\mathcal{L}^{1} (f(D))=0$. We have
\begin{equation}\label{control oerden n}
|f(x)-f(y)|\leq (m+1)|x-y|^n
\end{equation}
for every $x,y\in D$.  By using this inequality and regarding $f$ for a moment as a function defined just on the set $D$,
we may extend it to a $C^{n-1,1}$ function $\tilde{f}=\R^n\to \R$, with $D^l\tilde{f}(x)=0$ for
$x\in D$, $l=0,1,\dots ,n-1$, by means of Corollary \ref{consequence of Whitney ET}. By Bates's version \cite{Bates} of the Morse-Sard theorem for functions in the class $C^{n-1,1}(\R^n)$ (or by any of its generalizations \cite{DePascale}, \cite{BourKoKris2}, \cite{KorobkovKristensen} and \cite[Theorem 1.2]{AFG}) we then have $\mathcal{L}^{1} (\tilde{f}(D))=0$, hence $\mathcal{L}^{1} (f(D))=0$ too.
\end{proof}

Let us observe that Lemma \ref{polinomio de taylor 0} has the following consequence.

\begin{cor}\label{teorema para n=2}
Theorem \ref{main theorem} is true for $n=2$.
\end{cor}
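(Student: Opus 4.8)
The plan is to recognize that, in dimension two, the set $C_f$ of Theorem \ref{main theorem} is exactly the set $A$ that already appears in the proof of Theorem \ref{main theorem improved}, so that Lemma \ref{polinomio de taylor 0} delivers the conclusion with no extra work. The point is that below order $n=2$ the only homogeneous part of the Taylor polynomial is the first-order one, which is the gradient, and the gradient vanishes on the critical set by hypothesis.

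First I would set $E=C_f=\{x\in\R^2 : \partial^2 f(x)\neq\emptyset,\ Df(x)=0\}$ and verify that $f\in\widetilde{C}^{1,1}(E)$, so that $E$ is an admissible set for Theorem \ref{main theorem improved}. Indeed, if $\partial^2 f(x)\neq\emptyset$ then, by definition, $f$ has a Taylor expansion of order $1$ at $x$ and there is a $2$-homogeneous polynomial $Q$ with
$$
\liminf_{h\to 0}\frac{f(x+h)-f(x)-Df(x)(h)-Q(h)}{|h|^2}\geq 0.
$$
Since $T_1(x;x+h)=f(x)+Df(x)(h)$ and $Q(h)/|h|^2=Q(h/|h|)\geq -\|Q\|$, this gives
$$
\liminf_{y\to x}\frac{f(y)-T_1(x;y)}{|x-y|^2}\geq -\|Q\|>-\infty,
$$
which is precisely condition \eqref{definition of classes C tilde E}. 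Together with $Df(x)=0$ on $E$, this places us under the hypotheses of Theorem \ref{main theorem improved}.

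The key observation is that for $n=2$ the Taylor expansion of order $n-1=1$ at $x\in E$ is $T_1(x;y)=f(x)+Df(x)(y-x)$, so its homogeneous part is $P_x=P_x^1=Df(x)$. As $Df(x)=0$ for every $x\in E$, we obtain $P_x\equiv 0$ on the whole of $E$. In the decomposition $E=A\cup B\cup\bigl(E\setminus(A\cup B)\bigr)$ used in the proof of Theorem \ref{main theorem improved}, this forces $E=A$ and $B=\emptyset$, because membership in $B$ or in $E\setminus(A\cup B)$ requires a nonvanishing higher homogeneous part $P_x^{n-1}$, impossible here.

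Finally I would invoke Lemma \ref{polinomio de taylor 0} with this $E$ to conclude $\mathcal{L}^1(f(C_f))=\mathcal{L}^1(f(A))=0$. I do not expect any real obstacle: the analytic content has already been packed into Lemma \ref{polinomio de taylor 0}, which itself rests on the Whitney-type extension of Corollary \ref{consequence of Whitney ET} and on Bates's $C^{n-1,1}$ Morse-Sard theorem. The only thing to be careful about is matching the definition of $\partial^2 f$ against that of the class $\widetilde{C}^{1,1}(E)$; the genuine message of the corollary is simply that in the plane, once $Df$ vanishes, the delicate cases $B$ and $E\setminus(A\cup B)$ (which are what make the higher-dimensional argument harder) cannot arise at all.
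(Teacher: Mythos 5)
Your proposal is correct and follows essentially the same route as the paper, whose entire proof of this corollary is the observation that $C_f=A$ when $n=2$, so that Lemma \ref{polinomio de taylor 0} applies directly. Your additional verification that $\partial^2 f(x)\neq\emptyset$ forces membership in $\widetilde{C}^{1,1}(E)$ is the same remark the paper makes right after stating Theorem \ref{main theorem improved}, so nothing new is needed.
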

\begin{proof}
Note that $C_f=A$ for $n=2$.
\end{proof}

We also note that exactly the same argument as in the proof of {\bf Lemma \ref{polinomio de taylor 0}  provides a proof of Theorem \ref{theorem for the proximal subdifferential in the plane}.}
Since the case $n=2$ is already dealt with, from now we will assume that $n\geq 3$. 

In the following two lemmas we will show that $\mathcal{L}^1 \bigl( f(B)\bigr) =0$. Recall that if $x\in B$ then $P_x=P_x^{n-1}\not\equiv 0$.
\begin{lemma}\label{n impar}
For $n\geq 3$ odd, we have
$\mathcal{L}^1 \bigl( f(B)\bigr) =0$.
\end{lemma}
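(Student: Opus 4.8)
The plan is to imitate the proof of Lemma~\ref{polinomio de taylor 0}: localize $B$ into countably many pieces on which the constants are uniform, produce a two-sided H\"older-type estimate of order $n$, extend to a genuine $C^{n-1,1}$ function via the Whitney machinery of Theorem~\ref{WET for functions with Lipschitz derivatives} (or Corollary~\ref{consequence of Whitney ET}), and finish with Bates's Morse--Sard theorem \cite{Bates}. Concretely, using \eqref{definition of classes C tilde E} and the fact that $P_x=P_x^{n-1}$ on $B$, I would first write $B=\bigcup_{m,k}B_k^m$ with
$$
B_k^m=\Bigl\{x\in B:\ f(x+h)\ge f(x)+P_x^{n-1}(h)-(m+1)|h|^n\ \text{ for } |h|<\tfrac1k\Bigr\},
$$
reduce to bounding $\mathcal{L}^1(f(D))$ for $D=B_k^m\cap Q$ with $Q$ a cube of diameter $<\tfrac1k$, and refine the decomposition further so that $\|P_x^{n-1}\|\le M$ and the oscillation of the assignment $x\mapsto P_x^{n-1}$ on $D$ is at most a prescribed $\varepsilon$.

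The decisive step is a symmetric two-point computation in which the parity of $n$ enters. For $x,y\in D$ the Taylor expansions of order $n-1$ at $x$ and at $y$ give $f(y)-f(x)=P_x^{n-1}(y-x)+o(|x-y|^{n-1})$ and $f(x)-f(y)=P_y^{n-1}(x-y)+o(|x-y|^{n-1})$. Here I use that $n-1$ is \emph{even}, so that $P_y^{n-1}(x-y)=P_y^{n-1}(y-x)$; adding the two relations the two leading forms add rather than cancel, yielding
$$
\bigl(P_x^{n-1}+P_y^{n-1}\bigr)(y-x)=o(|x-y|^{n-1}).
$$
On a piece where $P_x^{n-1}$ is within $\varepsilon$ of a fixed $P_0$ this forces $|P_x^{n-1}(y-x)|\le (C\varepsilon+o(1))|x-y|^{n-1}$, confining the secant directions realized in $D$ to the cone $\{P_0\le 0\}$ and simultaneously showing $|f(x)-f(y)|=o(|x-y|^{n-1})$. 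Applying the one-sided inequality defining $B_k^m$ at $x$ (with $h=y-x$) and at $y$ (with $h=x-y$, again using evenness) then pins $f$ between $P_x^{n-1}(y-x)-C|x-y|^n$ and $-P_y^{n-1}(y-x)+C|x-y|^n$; the aim is to combine this with the displayed cancellation to upgrade the estimate to the genuine two-sided bound $|f(x)-f(y)|\le C|x-y|^n$ on $D$ (after discarding the part of $B$ where $P_0$ fails to vanish along the realized directions, whose image I expect to be negligible by a further reduction to the flat case of Lemma~\ref{polinomio de taylor 0}).

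Once such a two-sided order-$n$ estimate is in hand, the endgame is routine: Corollary~\ref{consequence of Whitney ET} extends $f|_D$ to some $\tilde f\in C^{n-1,1}(\R^n,\R)$ with $D^{l}\tilde f=0$ on $D$ for $0\le l\le n-1$, so $D$ lies in the critical set of $\tilde f$ and Bates's theorem gives $\mathcal{L}^1(f(D))=\mathcal{L}^1(\tilde f(D))=0$. I expect the \emph{main obstacle} to be exactly this upgrade, because a naive covering argument is doomed --- the oscillation of $f$ over a subcube of side $\delta$ is only of order $\delta^{n-1}$, against $\delta^{n}$ subcubes --- and because the class $\widetilde{C}^{n-1,1}$ supplies only a \emph{one-sided} remainder bound of order $|x-y|^n$ together with the weak two-sided estimate $o(|x-y|^{n-1})$ coming from the Taylor expansion; there is no superdifferential hypothesis to provide the matching upper bound directly. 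The real content of the lemma is therefore to show that the additive identity above, available precisely because $n-1$ is even, rules out the directions along which $f$ could vary at order $|x-y|^{n-1}$, reducing everything to the good directions where the full $O(|x-y|^n)$ control, and hence Bates's theorem, applies.
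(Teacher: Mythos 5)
Your symmetric two-point computation is correct as far as it goes, and the identity $(P_x^{n-1}+P_y^{n-1})(y-x)=o(|x-y|^{n-1})$ (valid because $n-1$ is even) is a genuine consequence of the hypotheses. But the step you yourself flag as the main obstacle --- upgrading to the two-sided bound $|f(x)-f(y)|\le C|x-y|^{n}$ on $D$ so that Corollary~\ref{consequence of Whitney ET} and Bates's theorem can be applied \emph{in dimension $n$} --- is not just difficult, it is false in general, and the parenthetical plan to ``discard the part of $B$ where $P_0$ fails to vanish along the realized directions'' by reducing to Lemma~\ref{polinomio de taylor 0} does not work. Your own sandwich
$P_x^{n-1}(y-x)-C|x-y|^n\le f(y)-f(x)\le -P_y^{n-1}(y-x)+C|x-y|^n$
only forces $P_0(u)\lesssim \varepsilon$ for unit secant directions $u$; it gives no lower bound on $P_0(u)$, so two points of $D$ whose secant direction satisfies $P_0(u)<0$ may perfectly well have $|f(x)-f(y)|\asymp |P_0(u)|\,|x-y|^{n-1}$. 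This is an order-$(n-1)$ oscillation, and no amount of refining the decomposition turns it into an order-$n$ one; the ``bad'' directions are precisely where all the content of the lemma lies, not a negligible remainder.

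The paper's proof exploits the evenness of $n-1$ in the opposite way: since $P_x^{n-1}\not\equiv 0$ and $t\mapsto t^{n-1}$ is even, one can fix (from a countable dense family) a direction $e_i$ with $P_x^{n-1}(e_i)\le -1/m$ uniformly on a piece of $B$, and then $f(x+te_i)\le f(x)+t^{n-1}(P_x^{n-1}(e_i)+\varepsilon)<f(x)$ for small $t\neq 0$: every point of that piece is a \emph{strict local maximum of $f$ along the line through it in direction $e_i$} (or a strict local minimum when $P_x^{n-1}(e_i)>0$). Consequently each line parallel to $e_i$ meets the piece (intersected with a small cube) in at most one point, one projects onto $[e_i]^{\perp}\cong\R^{n-1}$, and the function $g(\bar x)=\max_t f(\bar x+te_i)$ satisfies $|g(\bar x)-g(\bar y)|\le (m+1)|\bar x-\bar y|^{n-1}$. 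That is exactly the H\"older exponent needed to invoke Corollary~\ref{consequence of Whitney ET} for a $C^{n-2,1}$ extension of $g$ to $\R^{n-1}$ with vanishing derivatives, and Bates's theorem in dimension $n-1$ (where $C^{n-2,1}$ suffices) finishes the argument. The essential idea your proposal is missing is this dimension reduction: the order-$(n-1)$ estimate, useless for Morse--Sard in $\R^{n}$, becomes exactly sufficient after projecting out the direction in which $f$ is ``strictly concave of order $n-1$''.
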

\begin{proof}
We may assume that for every $x\in B$ there exists $e_x\in \mathbb{S}^{n-1}$ such that $P_x^{n-1}(e_x)<0$.
Let $\{ e_i\}_{i\in \mathbb{N}}$ be dense in $\mathbb{S}^{n-1}$. If we define
$$
B_{i,m}=\{ x\in B: P_x^{n-1}(e_i)\leq -\frac{1}{m}, ||P_x^{n-1}||\leq m \}
$$
we have that
$$
B=\bigcup_{i,m=1}^{\infty}B_{i,m}.
$$
Thus it is enough to prove $\mathcal{L}^1 \bigl( f(B_{i,m})\bigr) =0$.
Let $0<\varepsilon <\frac{1}{m}$. We define
$$
D_j=\{ x\in E: |f(x+h)-f(x)-P_x^{n-1}(h)|\leq \varepsilon |h|^{n-1}\ \hbox{if} \ \ |h|<\frac{1}{j}\} .
$$
The sequence $\{D_j\}$ is increasing. For every $x\in B_{i,m}$, we have that
$$
\lim_{h\to 0}\frac{f(x+h)-f(x)-P_x^{n-1}(h)}{|h|^{n-1}}=0,
$$
and consequently there exists $j$ such that $x\in D_j$. Hence $B_{i,m}\subset  \cup_jD_j$.

If $x\in D_j$ and $t\in (-\frac{1}{j},\frac{1}{j})$, $t\neq 0$, then
\begin{equation}\label{maxima in cubes}
f(x+te_i)\leq f(x)+t^{n-1}(P_x(e_i)+\varepsilon )\leq f(x)+t^{n-1}(-\frac{1}{m}+\varepsilon )<f(x).
\end{equation}
(Note that if $P_x^{n-1}(e_x)>0$ instead of $P_x^{n-1}(e_x)<0$, we will have local minima instead of local maxima, and the subsequent
arguments work as well.)

Let $\{C_r^j\}_r$ be a covering of $D_j$ by closed cubes with one edge parallel to $e_i$ and length equal to  $\frac{1}{\sqrt{n}j}$.
For every line $L$ parallel to $e_i$, equation \eqref{maxima in cubes} implies that
$$
\sharp \bigl( D_j\cap C_r^j\cap L\bigr) \leq 1
$$
and if $x\in  D_j\cap C_r^j\cap L$ then $f_{|_{L\cap C^{j}_{r}}}$ attains a strict maximum at $x$.
Let $F_r^j=\pi (C_r^j\cap D_j)$ the projection of $C_r^j\cap D_j$ on $[e_i]^{\perp}$ (the orthogonal complement of the line spanned by $e_i$). Define $g:F_r^j\to \R$ by
$g(\bar{x})=\max \{ f(\bar{x}+te_i): \bar{x}+te_i\in C_r^j\}$. 
Now, by means of Corollary \ref{consequence of Whitney ET}, we may extend $g$ as a $C^{n-2,1}$ function to the whole
$\R^{n-1}$ (we are identifying $[e_i]^{\perp}$ and $\R^{n-1}$),
with derivatives $D^lg(\bar{x})=0$, $l=1,\dots n-2$, for every $\bar{x}\in F_r^j$. Indeed, we have, for every $\bar{x},\bar{y}\in F_r^j$, that
$$
g(\bar{x})-g(\bar{y})=f(x)-g(\bar{y})\leq f(x)-f(z) \leq
$$
$$
 \leq |f(z)-f(x)|\leq (m+1)|x-z|^{n-1}=(m+1)|\bar{x}-\bar{y}|^{n-1}
$$
where $z$ satisfy $\pi (z)=\bar{y}$ and $\bar{x}-\bar{y}=x-z$,
because
$$
|f(x+h)-f(x)|\leq (m+1)|h|^{n-1}
$$
for every $x\in D_j$ provided that $|h|<\frac{1}{j}$. Hence it is clear that the conditions of Corollary \ref{consequence of Whitney ET} are met. Therefore, by Bates's version of the Morse-Sard theorem we have $\mathcal{L}^{1} (\bigl( g(F_k^j)\bigr)=0$, and since
$f(D_j\cap C_k^j) = g(F_k^j)$ we conclude that $\mathcal{L}^{1} \left(\bigl( f(D_j\cap C_k^j) \bigr)\right)=0$ too.
\end{proof}

On the other hand, we have

\begin{lemma}\label{n par}
If $n\geq 4$ is even, then
$\mathcal{L}^1 \bigl( f(B)\bigr) =0$.
\end{lemma}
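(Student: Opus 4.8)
The plan is to treat the even case by reducing the dimension, because the mechanism of Lemma \ref{n impar} is no longer available: for $n$ even the leading term $P_x^{n-1}$ is homogeneous of \emph{odd} degree $n-1$, so on each line through $x$ it behaves like an odd power of the parameter and creates neither a local maximum nor a local minimum; consequently one cannot bound the number of points of $B$ lying on a given line. Instead I would confine $B$, locally, to an $(n-1)$-dimensional surface and appeal to the Morse--Sard theorem in dimension $n-1$, which is already available through Bates's theorem for the sharp class $C^{n-2,1}$ (equivalently, through the lower-dimensional instances of the present scheme).

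First I would decompose $B=\bigcup_{i,m}B_{i,m}$ with $B_{i,m}=\{x\in B:\ |P_x^{n-1}(e_i)|\ge 1/m,\ \|P_x^{n-1}\|\le m\}$ for a dense sequence $\{e_i\}\subset\mathbb{S}^{n-1}$, and reduce to a single $B_{i,m}$; intersecting with the sets $D_j$ as in Lemma \ref{n impar} and covering by small closed cubes $D$, I obtain on $D$ a uniform oscillation bound $|f(x)-f(y)|\le C|x-y|^{n-1}$. Since $P_x^{\beta}\equiv 0$ for $1\le|\beta|\le n-2$ on $B$, the Whitney data $(f,0,\dots,0)$ of order $n-2$ satisfies the hypotheses of Corollary \ref{consequence of Whitney ET}, so $f$ extends to $\tilde f\in C^{n-2,1}(\R^n)$ with $\tilde f=f$ on \emph{all} of $D$ and $D^{\beta}\tilde f=0$ on $D$ for $1\le|\beta|\le n-2$. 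In parallel, because $f$ has an approximate Taylor polynomial of order $n-2$ at every point of $E$, Theorem \ref{LiuTai} provides, on large subsets, genuine $C^{n-1}$ models $g$ of $f$ whose derivatives match the Taylor coefficients of $f$ at density points.

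The core is the dimension reduction, which is the content of Lemma \ref{Derivadas no nulas}. Following the classical scheme by which Kneser--Glaeser yields Morse--Sard, I would, near a point of $D$, use a partial derivative of order $n-1$ that is nonzero there to realize $D$ (up to a null set) inside a $C^1$ hypersurface $Z$: concretely, pick $\alpha=\alpha'+e_\ell$ with $|\alpha'|=n-2$ and $P_x^{n-1}$ detected along $e_\ell$, so that on a $C^{n-1}$ model $g$ of $f$ the function $w=D^{\alpha'}g$ vanishes and has nonzero normal derivative along $e_\ell$ at density points of $D$. The implicit function theorem then gives the hypersurface $Z$, I parametrize it by $\phi\in C^1$, and form $F=g\circ\phi$. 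Since $g$ is $(n-2)$-flat on $D$, the $C^{r,1}_{\mathrm{loc}}$ Kneser--Glaeser theorem (Theorem \ref{Kneser Glaeser}, with $r=n-1$, $s=n-2$) upgrades $F$ to a map on an open subset of $\R^{n-1}$ lying in the sharp Morse--Sard class there, so that the Morse--Sard theorem in dimension $n-1$ applies and the image of this (positive-measure) part of $D$ is $\mathcal{L}^1$-null; a Lindel\"of covering globalizes this.

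The step I expect to be the main obstacle is precisely this reduction, on two counts. First, the regularity bookkeeping: one must align the flatness order $s=n-2$ with the available smoothness so that the composite lands in the \emph{sharp} class on $\R^{n-1}$ — this is exactly why the $C^{r,1}_{\mathrm{loc}}$ refinement of Kneser--Glaeser, fed by the $C^{n-1}$ models from Liu--Tai, is needed rather than their classical counterparts. Second, and more delicate, is the residual $\mathcal{L}^n$-null set $N_0\subset D$ of points not captured by any $C^1$ model: its image under $f$ is not automatically null. Here I would exploit the \emph{exact} extension $\tilde f\in C^{n-2,1}(\R^n)$ with $\tilde f=f$ and $D^\beta\tilde f=0$ ($1\le|\beta|\le n-2$) on all of $D$, which confines $D$ to level sets of the Lipschitz functions $D^{\alpha'}\tilde f$; on the $(n-1)$-dimensional pieces so obtained the oscillation bound $|f(x)-f(y)|\le C|x-y|^{n-1}$ has H\"older exponent equal to the dimension, whence $\mathcal{H}^{n-1}$-null subsets have $\mathcal{L}^1$-null image (covering $N_0$ by $(n-1)$-dimensional cubes of side $\delta$ gives $\mathcal{L}^1(f(N_0))\le C\sum\delta^{n-1}\to 0$). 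Granting this, the positive-measure part and the residual together give $\mathcal{L}^1(f(B_{i,m}))=0$, and summation over $i,m$ completes the proof that $\mathcal{L}^1(f(B))=0$.
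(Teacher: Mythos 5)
There is a genuine gap, and it is concentrated exactly where you predicted: the residual set $N_0$. Your dimension-reduction scheme rests on Liu--Tai's Lusin models, but the Lusin property only controls $f$ up to an $\mathcal{L}^n$-null set, and the set $D=B_{j,i}\cap(\text{cube})$ you need to handle may itself be $\mathcal{L}^n$-null (critical sets typically are). In that case a $C^{n-1}$ model $g$ is allowed to differ from $f$ on \emph{all} of $D$, the ``positive-measure part'' captured by the implicit function theorem is empty, and the entire burden falls on the claim $\mathcal{H}^{n-1}(N_0)=0$ --- which you never establish and which does not follow from $\mathcal{L}^n(N_0)=0$ (an $\mathcal{L}^n$-null set can have infinite $\mathcal{H}^{n-1}$ measure, indeed Hausdorff dimension $n$). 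The proposed rescue via level sets of the Lipschitz functions $D^{\alpha'}\tilde f$ with $|\alpha'|=n-2$ is circular: those level sets are genuine $(n-1)$-rectifiable hypersurfaces of locally finite $\mathcal{H}^{n-1}$ measure only where their gradient --- a derivative of order $n-1$ of $\tilde f$ --- is nondegenerate, and that derivative exists only a.e.\ and is precisely the object you lack classically on $B$. There are also secondary regularity mismatches: Theorem \ref{Kneser Glaeser} with $r=n-1$, $s=n-2$ needs the outer map in $C^{n-1,1}_{\textrm{loc}}$ and the inner map in $C^{1,1}_{\textrm{loc}}$, whereas you only have $C^{n-1}$ Lusin models (or $C^{n-2,1}$ Whitney extensions of flat data) and a $C^1$ parametrization from the implicit function theorem.

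The paper avoids all of this with a more elementary device that is absent from your proposal: instead of fixing only a direction $e_i$, it fixes the \emph{polynomial}, decomposing $B$ into sets $B_{j,i}=\{x: |f(x+h)-f(x)-T_i(h)|\le\varepsilon|h|^{n-1},\ |h|\le 1/j\}$ for a dense sequence $\{T_i\}$ in $\mathcal{P}(^{n-1}\R^n)$. Because the same $T_i$ governs every point of the piece and $n-1$ is odd, if $T_i(e)\ge c$ then any line parallel to $e$ meets a small-diameter piece $D$ in at most one point (a midpoint comparison yields the contradiction $1<6/2^{n-1}$ for $n\ge 4$). Hence the orthogonal projection $\pi$ onto $[e_i]^{\perp}$ is injective on $D$ with $|\pi(x)-\pi(y)|\ge\sin\alpha\,|x-y|$, the transferred function $g=f\circ\pi^{-1}$ satisfies $|g(\bar x)-g(\bar y)|\le C|\bar x-\bar y|^{n-1}$ on \emph{every} point of $\pi(D)$, and Corollary \ref{consequence of Whitney ET} plus Bates's theorem in dimension $n-1$ finish the proof --- no Lusin approximation, no implicit function theorem, no Kneser--Glaeser, and no residual set. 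Your instinct to reduce to dimension $n-1$ is the right one, but the reduction must be achieved by this everywhere-valid injective projection rather than by almost-everywhere smooth models.
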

\begin{proof}
Recall that every $x\in B$ satisfies
$$
\lim_{h\to 0}\frac{f(x+h)-f(x)-P_x^{n-1}(h)}{|h|^{n-1}}= 0
$$
with $P_x^{n-1}\not\equiv 0$. By considering
$B=\bigcup_{j=1}^{\infty}\{x\in B : \|P_{x}^{n-1}\|\geq 1/j\}$, we may assume without loss of generality that for every
$x\in B$, the corresponding $P_x^{n-1}$ satisfy $\|P_x^{n-1}\|\geq 2c$ for a fixed positive constant $c$.

Let $\{T_i\}$ be a dense sequence in the space of $(n-1)$-homogeneous polynomials on $\R^n$. Let $0<\varepsilon <\frac{c}{2}$.
We define
$$
B_{j,i}=\{ x\in \R^n: |f(x+h)-f(x)-T_i(h)|\leq \varepsilon |h|^{n-1}, |h|\leq \frac{1}{j}\}.
$$
For every $x\in B$, there exist $j$ and $T\in \mathcal{P} (^{n-1}\R^n)$ such that
$$
|f(x+h)-f(x)-T(h)|\leq \frac{\varepsilon}{2} |h|^{n-1}
$$
if $|h|\leq \frac{1}{j}$. Let $T_i$ be such that $||T_i-T||<\frac{\varepsilon}{2}$; note that
$||T_i||>2c-\frac{\varepsilon}{2}$ necessarily. We have
$$
|f(x+h)-f(x)-T_i(h)|\leq \frac{\varepsilon}{2} |h|^{n-1}+|T(h)-T_i(h)|\leq \varepsilon |h|^{n-1}
$$
if  $|h|\leq \frac{1}{j}$, hence $x\in B_{j,i}$. That is
$$
B\subset \bigcup_{j,i=1}^{\infty}B_{j,i}.
$$
Consequently, it is enough to prove that
$$
\mathcal{L}^{1} \bigl( f(B_{j,i})\bigr) =0 \ \ \hbox{for every} \ i,j .
$$
In the rest of the argument we denote by $D$ one of such $B_{j,i}$, that is
$$
D=\{ x\in \R^n:|f(x+h)-f(x)-T_i(h)| \leq \varepsilon |h|^k \textrm{ if } |h|\leq \frac{1}{j}\}.
$$
It is clear that $D$ is closed. Find $e_i\in \mathbb{S}^{n-1}$ such that $T_i(e_i)=\frac{3}{2}c$, and let $L$ be a line parallel to $e_i$.
We may split $D$ in a countable collection of sets with diameter less than or equal
to $\frac{1}{j}$, and thus we may assume that $D$ itself has diameter less or equal
than $\frac{1}{j}$. For every $e\in \mathbb{S}^{n-1}$ satisfying $T_i(e)\geq c$, we consider lines $L_e$ parallel to $e$.
\begin{claim}
We have $ \sharp (D\cap L_e)\leq 1$, and in particular $ \sharp (D\cap L)\leq 1$.
\end{claim}
Assume for a moment that the Claim is true.
Let $F=\pi (D)$ the projection of $D$ on $[e_i]^{\perp}$. Note that $\pi :D \to F$ is a homeomorphism since $D$ is compact and we are assuming the Claim.
We define $g:F\to \R$ by
$g(\bar{x})=f(\pi ^{-1}(\bar{x}))$. 

Again, we are going to extend $g$ to a $C^{n-2,1}$ function on
$\R^{n-1}$  with derivatives $D^lg(\bar{x})=0$, $l=1,\dots ,n-2$.
Let $\bar{x},\bar{y}\in F$, $x=\bar{x}+t_xe_i\in D$ and $y=\bar{y}+t_ye_i\in D$, and let
$\alpha >0$ be such that $T_i(e)\geq c$ provided that the angle $\widehat{e,e_i}$ between $e$ and $e_i$ satisfies $\widehat{e,e_i}<\alpha$. We have that
$$
|\bar{x}-\bar{y}|\geq \sin \alpha |x-y|.
$$
Indeed, otherwise $\widehat{e_i,e_{xy}}<\alpha$, or $\widehat{e_i,-e_{xy}}<\alpha$, for $e_{xy}=\frac{y-x}{|y-x|}$,
which implies $x=y$ (by the Claim and the fact that $x,y$ belong to the line $L_{e_{xy}}$).
Consequently, we have
$$
|g(\bar{x})-g(\bar{y})|=|f(x)-f(y)|\leq (||T_i||+\varepsilon )|x-y|^{n-1}\leq
\frac{(||T_i||+\varepsilon )}{(\sin \alpha )^{n-1}} |\bar{x}-\bar{y}|^{n-1}
$$
for every $\bar{x},\bar{y}\in F$. This inequality and Corollary \ref{consequence of Whitney ET} allow us to extend $g$ to $\R^{n-1}$ as a $C^{n-2,1}$
function, with null derivatives up to the order $n-2$ at the points of $F$, and we may then conclude the proof as in Lemma \ref{n impar}.

\medskip

It only remains to prove the Claim. First, we observe that for every $x\in D$ and $|t|\leq \frac{1}{j}$, we have
$$
|f(x+te)-f(x)-T_i(e)t^{n-1}| \leq \varepsilon |t|^{n-1},
$$
or equivalently
$$
-\varepsilon |t|^{n-1}\leq f(x+te)-f(x)-T_i(e)t^{n-1} \leq \varepsilon |t|^{n-1}.
$$
Hence
$$
f(x+te)\in \bigl( f(x)+(T_i(e)-\varepsilon )t^{n-1},f(x)+(T_i(e)+\varepsilon )t^{n-1} \bigr)
$$
for $t\in (0, 1/j)$, while
$$
f(x+te)\in \bigl( f(x)+(T_i(e)+\varepsilon )t^{n-1},f(x)+(T_i(e)-\varepsilon )t^{n-1} \bigr)
$$
for $t\in (-1/j,0)$. Now suppose, seeking a contradiction, that we had $x, y\in D\cap L$, $x\neq y$ and $r=|y-x|>0$. Then we may assume that $y=x+|y-x|e$ (and $x=-|y-x|e$).
Set $z=x+\frac{r}{2}e=y-\frac{r}{2}e$. We have
\begin{eqnarray*}
& & f(z)<f(x)+(T_i(e)+\varepsilon )(\frac{r}{2})^{n-1}<f(x)+\frac{3}{2}\frac{r^{n-1}}{2^{n-1}}T_i(e),\\
& &
f(z)>f(y)-(T_i(e)+\varepsilon )(\frac{r}{2})^{n-1}, \textrm{ and } \\
& &
f(y)>f(x)+(T_i(e)-\varepsilon )r^{n-1}>f(x)+\frac{T_i(e)}{2}r^{n-1}.
\end{eqnarray*}
From these inequalities we obtain
$$
f(x)+\frac{T_i(e)}{2}r^{n-1}-(T_i(e)+\varepsilon )(\frac{r}{2})^{n-1}<f(z)<f(x)+\frac{3}{2}\frac{r^{n-1}}{2^{n-1}}T_i(e),
$$
which implies
$$
\frac{T_i(e)}{2}r^{n-1}<3\frac{r^{n-1}}{2^{n-1}}T_i(e).
$$
Hence $1<\frac{6}{2^{n-1}}$, which is impossible since $n\geq 4$.
\end{proof}

Observe that for $n=3$ we have $C_f=B\cup A$, and consequently the same proof as above establishes Theorem \ref{Theorem for n=3}.

It only remains to show that $\mathcal{L}^1 \Bigl( f\bigl( E\setminus (B\cup A)\bigr) \Bigr) =0$. Note that if $n\leq 3$ then $E=A\cup B$ and this is trivially true. Therefore in the sequel we will assume that $n\geq 4$. 

\begin{lemma}\label{Derivadas no nulas} We have that
$
\mathcal{L}^1 \Bigl( f\bigl( E\setminus (B\cup A)\bigr) \Bigr) =0.
$
\end{lemma}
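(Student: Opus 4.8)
The plan is to argue by induction on the dimension $n$, the cases $n\le 3$ being already settled (for $n\le 3$ one has $E=A\cup B$, so $E\setminus(A\cup B)=\emptyset$ and the statement is trivial, while the pieces $A$, $B$ are covered by Lemma \ref{polinomio de taylor 0} and Lemmas \ref{n impar}--\ref{n par}). So assume $n\ge 4$ and that Theorem \ref{main theorem improved} holds in every dimension $<n$. Writing $P_x=P_x^2+\dots+P_x^{n-1}$ (recall $P_x^1=Df(x)=0$), for each $x\in E\setminus(A\cup B)$ there is a least integer $s=s(x)$ with $2\le s\le n-2$ such that $P_x^s\not\equiv 0$ while $P_x^j\equiv 0$ for $j<s$. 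Partitioning according to $s$, it suffices to prove $\mathcal L^1(f(E_s))=0$ for each fixed $s$, where $E_s=\{x\in E\setminus(A\cup B): s(x)=s\}$. Since $f$ has a Taylor expansion of order $n-1$ at every point of $E$, it is approximately differentiable of order $n-1$ there, so by the Liu--Tai Theorem \ref{LiuTai} $f$ enjoys the Lusin property of order $n-1$: there are $g_\nu\in C^{n-1}(\R^n)$ with $\mathcal L^n(\{x\in E: f(x)\ne g_\nu(x)\})\to 0$. At a density point $x$ of a level set $\{f=g_\nu\}$ the approximate Taylor polynomials of $f$ and of the $C^{n-1}$ function $g_\nu$ coincide, whence $\tfrac1{k!}D^kg_\nu(x)=P_x^k$ for $1\le k\le n-1$; in particular $Dg_\nu(x)=0$, $D^jg_\nu(x)=0$ for $j<s$, and $D^sg_\nu(x)\ne 0$. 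Thus, up to an $\mathcal L^n$-null set, $E_s$ is covered by countably many sets $D$ on each of which $f$ agrees with a fixed $g:=g_\nu$ whose $(n-1)$-jet is the one prescribed by $f$, and on which $D^jg\equiv 0$ $(1\le j<s)$ while $D^sg\ne 0$.

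Fix one such $D$ and a point $x_0\in D$. Choose a multi-index $\beta$ with $|\beta|=s-1$ and an index $l$ so that $\partial_l D^{\beta}g(x_0)\ne 0$, and set $w=D^{\beta}g$. Then $w\in C^{n-s}$ (so $w\in C^2$, as $s\le n-2$), $w\equiv 0$ on $D$ (because $D^{s-1}g\equiv 0$ on $D$), and $Dw(x_0)\ne 0$. By the Implicit Function Theorem, in a neighbourhood of $x_0$ the set $M=\{w=0\}$ is a hypersurface of class $C^{n-s}$ containing $D$, which we parametrize by some $\psi$ defined on an open set $W\subset\R^{n-1}$; note $\psi\in C^{n-s}\subset C^{\,n-1-s,1}_{\mathrm{loc}}$. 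Regarding $g$ as a map of class $C^{\,n-2,1}_{\mathrm{loc}}$ (since $C^{n-1}\subset C^{\,n-2,1}_{\mathrm{loc}}$) that is $(s-1)$-flat on $D$, we apply the Kneser--Glaeser Theorem \ref{Kneser Glaeser} with $r=n-2$ and flatness order $s-1$ to the composition $g\circ\psi$: it extends to a function $H\in C^{\,n-2,1}_{\mathrm{loc}}(W)$ that is $(s-1)$-flat on $A^{*}=\psi^{-1}(D)$. In particular $DH\equiv 0$ on $A^{*}$, and $H(A^{*})=g(\psi(A^{*}))=g(D)=f(D)$.

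Now $H\in C^{\,n-2,1}_{\mathrm{loc}}(W)\subset \widetilde C^{\,(n-1)-1,1}(A^{*})$ with $DH\equiv 0$ on $A^{*}\subset W\subset\R^{n-1}$, so the inductive hypothesis (Theorem \ref{main theorem improved} in dimension $n-1$) gives $\mathcal L^1(H(A^{*}))=0$, that is $\mathcal L^1(f(D))=0$. Covering each $E_s$ by countably many such $D$ (using the finitely many admissible pairs $(\beta,l)$, the Lindel\"of property to pass from the local charts to a countable family, and the sequence $g_\nu$) yields $\mathcal L^1(f(E_s))=0$, and summing over the finitely many values of $s$ finishes the proof.

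I expect the crux to be the simultaneous bookkeeping of smoothness and flatness in the second paragraph: one must pick $r$ and the flatness order so that both the inner parametrization $\psi$ and the outer map $g$ fall into exactly the classes required by Theorem \ref{Kneser Glaeser}, and so that the pulled-back function $H$ lands in $\widetilde C^{\,(n-1)-1,1}(A^{*})$ with vanishing gradient --- precisely the hypothesis needed to close the induction in dimension $n-1$; the counts $C^{n-1}\subset C^{\,n-2,1}_{\mathrm{loc}}$, $w\in C^{n-s}$, $\psi\in C^{\,n-1-s,1}_{\mathrm{loc}}$, $H\in C^{\,n-2,1}_{\mathrm{loc}}$ all have to line up. A second delicate point, which requires genuine care rather than routine estimates, is the measure-theoretic completion: one must work only at density points of the Lusin level sets (where the jets of $g_\nu$ and $f$ agree, so that $D\subset M$), and one must dispose of the $\mathcal L^n$-null set of $E_s$ left uncovered by all the $g_\nu$ --- a set on which no smooth representative is available and for which the lowest order $s\le n-2$ prevents the elementary H\"older/Whitney-flat argument of Lemma \ref{polinomio de taylor 0} from applying, so that its treatment must again be fed through the density-point and reduction machinery above.
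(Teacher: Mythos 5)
Your second paragraph is essentially the paper's own argument: stratify $E\setminus(A\cup B)$ by the order $s$ of the first nonvanishing homogeneous term of the Taylor expansion, use the implicit function theorem on a derivative of order $s-1$ to trap the stratum inside a $C^{n-s}$ hypersurface, push $f$ down to $\R^{n-1}$ via the Kneser--Glaeser Theorem \ref{Kneser Glaeser} (your smoothness/flatness bookkeeping is correct and matches the paper's, with $k=s-1$ and parametrizations of class $C^{n-2-k,1}_{\textrm{loc}}$), and finish in dimension $n-1$; the paper quotes Bates's theorem for the resulting $C^{n-2,1}(\R^{n-1})$ function where you invoke an inductive hypothesis, and either conclusion is fine.

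The genuine gap is in your first paragraph, i.e., in how you manufacture the smooth representative $g$ to which all of this is applied. The Lusin property of order $n-1$ only produces functions $g_\nu\in C^{n-1}$ agreeing with $f$ outside a set of small $\mathcal{L}^n$-measure, and your jet-matching argument only works at $\mathcal{L}^n$-density points of $\{f=g_\nu\}$. This controls $f$ on $E_s$ only up to an $\mathcal{L}^n$-null set --- but a null set of critical points is exactly what a Morse--Sard theorem must handle, since its image can a priori have positive $\mathcal{L}^1$-measure. Worse, the set $E\setminus(A\cup B)$ is, by the very geometry you exploit, locally contained in hypersurfaces and hence typically $\mathcal{L}^n$-null, so the Lusin approximation may miss it entirely: no point of $E_s$ need be a density point of any $\{f=g_\nu\}$. (There is also the lesser point that Liu--Tai requires $E$ and $f$ to be measurable, which Theorem \ref{main theorem improved} does not assume.) You acknowledge the leftover null set at the end but propose to feed it ``again through the density-point machinery,'' which is circular. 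The paper's proof avoids this entirely: it decomposes $E$ into sets $E_j$ on which the pointwise Whitney-type estimates $|D^{\alpha}f(y)-D^{\alpha}T_{n-2}(x;y)|\leq Mj|x-y|^{n-1-|\alpha|}$ hold --- these are derived from De Giorgi's Lemma \ref{De Giorgi} applied to the polynomial $T_{n-2}(y;\cdot)-T_{n-2}(x;\cdot)$ on the lens $B(x,|x-y|)\cap B(y,|x-y|)$ --- and then applies the $C^{n-2,1}$ Whitney extension theorem \ref{WET for functions with Lipschitz derivatives} on the arbitrary (possibly null, possibly nonmeasurable) set $E_j$, obtaining $g_j\in C^{n-2,1}$ that agrees with $f$ and its formal derivatives at \emph{every} point of $E_j$. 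Replacing your Lusin step with this De Giorgi--Whitney step is what is needed to close the argument.
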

\begin{proof}
We will make use of the arguments in the proof of Liu-Tai's Theorem \ref{LiuTai} given in \cite[pages 193-194]{LiuTai}, which in turn employ the following lemma (see \cite[Lemma 2.1]{Campanato} for a proof).

\begin{lemma}[De Giorgi]\label{De Giorgi}
Let $V$ be a measurable subset of the ball $B(x,r)$ of $\R^n$ such that $\mathcal{L}^{n}(V)\geq A r^n$ for some constant $A>0$. Then for each $k\in\N$ there exists a constant $C=C(n,k, A)>0$ (depending only on $n$, $k$ and $A$) such that
$$
|D^{\alpha}p(x)|\leq \frac{C}{r^{n+|\alpha|}}\int_{V}|p(y)|dy
$$
for all polynomials $p$ of degree at most $k$.
\end{lemma}
Let $T_{n-2}(x; \cdot)$ be the Taylor polynomial of order $n-2$ of $f$, centered at $x$; of course, $T_{n-2}(x; \cdot)$ is obtained from $T_{n-1}(x; \cdot)$ by discarding $P_{x}^{n-1}$, the $(n-1)$-homogeneous term of $T_{n-1}(x; \cdot)$. Since every ordinary limit is also an approximate limit, the proof of \cite[Theorem 1]{LiuTai} shows that the coefficients of the polynomials $T_{n-1}(x; y)$, which keeping Liu-Tai's notation we will denote by $f_{\alpha}(x)$, are measurable functions of $x$ whenever $E$ is measurable (this can also be deduced from the statement of Liu-Tai's theorem). As a matter of fact, we will not need to use measurability of the coefficients $f_{\alpha}(x)$; we just mention that this is so because it may be an interesting and useful fact to know for those readers who are not already acquainted with the arguments of \cite{LiuTai}.

Because $f$ has a Taylor expansion of order $n-1$ at each point $x$ of $E$, we can write $$E=\bigcup_{j=1}^{\infty}E_{j},$$
where
$$
E_{j}:=\left\{ x\in E \, : \, \frac{|f(y)-T_{n-2}(x;y)|}{|y-x|^{n-1}}\leq j \textrm{ for all } y \textrm{ with } 0<|y-x|\leq\frac{1}{j}\right\}\cap D_j,
$$
with
$$
D_j=\left\{ x\in E \, : \, |f_{\alpha}(x)|\leq j, |\alpha|\leq k-2\right\}.
$$
We claim that the arguments of Liu and Tai's in \cite[page 193]{LiuTai} imply that
$$
|D^{\alpha}f(y)-D^{\alpha}T_{n-2}(x; y)|\leq M j|y-x|^{n-1-|\alpha|} \eqno(*)
$$
for all $x,y\in E_j$ with $|x-y|\leq 1/j$ and all multi-indices $\alpha$ of order $|\alpha|\leq n-2$, where $M$ is a constant depending only on $n$. 

Assume for the moment that this claim is true, and let us see how the proof of Lemma \ref{Derivadas no nulas} can be completed. Then can apply Theorem \ref{WET for functions with Lipschitz derivatives} (for not necessarily closed sets, see the remark after its statement) in order to find a function $g_{j}\in C^{n-2,1}(\R^n)$ such that the restriction of $g_j$ to $E_j$ coincides with $f$, and the restriction of each partial derivative $D^{\alpha}g_{j}$ to $E_j$ coincides with $\partial^{\alpha}f$, for all multi-indices $\alpha$ of the order $|\alpha|\leq n-2$. This obviously implies (denoting the set of critical points of a function $\varphi$ by $C_{\varphi}$) that
$$
f(E)\subseteq\bigcup_{j=1}^{\infty}g_{j}(C_{g_{j}}).
$$
Therefore we may and do assume in the sequel that $f$ is of class $C^{n-2,1}$.
Now, for each $x\in C:=E\setminus (B\cup A)$,
let $k_x$ be the smallest index such that $P^{k_x+1}_x\not\equiv 0$ but $P^l_x\equiv 0$ for $l=1,\dots ,k_x$. Note that
$1\leq k_x\leq n-3$ necessarily. We may thus split $C$ into $n-3$ subsets on each of which $k_x$ is constant, and then assume without loss of generality
that $k_x=k$, a constant, for every $x\in C$. In particular, $f$ is $k$-flat on $C$. By the implicit function theorem and local compactness, we may write
$$
C\subseteq \bigcup_{j=1}^{\infty}M_j,
$$
where the $M_j$ are manifolds of dimension $n-1$ parametrized by functions $h_j:W_j\subset\R^{n-1}$ of class $C^{n-2-k,1}_{\textrm{loc}}$. Hence we may further assume that $C$ is just one of these manifolds, say $C=h(W)$, $W\subseteq\R^{n-1}$, with $h\in C^{n-2-k,1}_{\textrm{loc}}(\R^{n-1})$. Let us denote $C^{*}=g^{-1}(C)$. Then, by Theorem \ref{Kneser Glaeser}, there exists $H\in C^{n-2, 1}_{\textrm{loc}}(\R^{n-1})$ such that $H(x)=f(h(x))$ for every $x\in C^{*}$ and $H$ is $k$-flat on $C^{*}$, and in particular $DH=0$ on $C^{*}$. This means that 
$$
f(C)\subset H(\{x: DH(x)=0\}).
$$
But, according to Bates's theorem, $$\mathcal{L}^{1}\left(H(\{x: DH(x)=0\})\right)=0.$$ Therefore
$\mathcal{L}^{1}(f(C))=0$ as well, and we are done.

Now let us see how Liu and Tai's arguments in \cite[page 193]{LiuTai} allow us to establish $(*)$. Denote
$$
\rho=\frac{\mathcal{L}^{n}(B(x, |y-x|)\cap B(y, |y-x|))}{|y-x|^n}, \,\,\, x, y\in\R^n, x\neq y,
$$
and observe that $\rho$ is independent of $x, y$. Now fix $j\in\N$ and consider two different points $x, y\in E_j$ with $|x-y|\leq 1/j$, and define
$$
V(x,y,j)=B(x, |x-y|)\cap B(y, |x-y|).
$$ 
If $z\in V(x,y,j)$ we have, for the polynomial $q(z)=T_{n-2}(y;z)-T_{n-2}(x;z)$, that
\begin{eqnarray*}
& & |q(z)|\leq |T_{n-2}(x;z)-f(z)|+|f(z)-T_{n-2}(y;z)|\\
& & \leq j \left(|z-x|^{n-1}+|y-z|^{n-1}\right)\leq 2j|x-y|^{n-1}.
\end{eqnarray*}
Then we can apply De Giorgi's Lemma, with $V=V(x,y,j)$ and $r=|x-y|$, to obtain
$$
|D^{\alpha}q(y)|=|f_{\alpha}(y)-D^{\alpha}T_{n-2}(x;y)|\leq\frac{C}{r^{n+|\alpha|}}\int_{V(x,y,j)}|q(z)|dz\leq 2j\rho C r^{n-1-|\alpha|}.
$$
This shows $(*)$.
\end{proof}

\medskip

\section*{Appendix}

In a preliminary version of this paper we included a short proof of the following result.
\begin{theorem}\label{improvement of a theorem of AFG1}
Let $n\geq m$ be positive integers, $k:=n-m+1$ and let $f:\R^n\to\R^m$ be such that
\begin{enumerate}
\item $f\in C^{k-1}(\R^n, \R^m)$;
\item $\limsup_{h\to 0}\frac{|f(x+h)-f(x)-Df(x)(h) - ... - \frac{1}{(k-1)!} D^{k-1}f(x)(h^{k-1})|}{|h|^k}<\infty$ for every $x\in\R^n$.
\end{enumerate}
Then $\mathcal{L}^{m}\left(f(C_f) \right)=0$, where
$C_f:=\{ x\in\R^n : \textrm{rank}\left(Df(x)\right) < m\}$.

\noindent The same statement holds true if $\R^n$ is replaced with an open subset of $\R^n$. 
\end{theorem}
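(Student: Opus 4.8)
The plan is to reduce everything to Bates's theorem \cite{Bates} by covering $\R^n$ with countably many pieces on each of which $f$ coincides, together with all its derivatives up to order $k-1$, with a genuine function of class $C^{k-1,1}$. Write $T_{k-1}(x;z)=\sum_{|\gamma|\leq k-1}\frac{1}{\gamma!}D^{\gamma}f(x)(z-x)^{\gamma}$ for the Taylor polynomial of $f$ at $x$, which is legitimate since $f\in C^{k-1}$; then hypothesis (2) says exactly that for every $x$ there are constants $M_x,\delta_x>0$ with $|f(x+h)-T_{k-1}(x;x+h)|\leq M_x|h|^k$ for $|h|<\delta_x$. To make this uniform I would set, for each $j\in\N$,
$$
E_j=\Bigl\{x\in\R^n \,:\, |f(y)-T_{k-1}(x;y)|\leq j\,|y-x|^{k}\ \text{for all } y \text{ with } |y-x|\leq \tfrac{1}{j}\Bigr\},
$$
so that $\R^n=\bigcup_{j}E_j$, and then cover each $E_j$ by countably many closed cubes $Q$ of diameter at most $1/j$, so that any two points of $D:=E_j\cap Q$ are at distance at most $1/j$. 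It suffices to prove $\mathcal{L}^m(f(C_f\cap D))=0$ for each such $D$.

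The main step, and the point where I would import the Liu--Tai/De Giorgi idea exactly as in the verification of $(*)$ in the proof of Lemma \ref{Derivadas no nulas}, is to check that on $D$ the family $\{D^{\alpha}f\}_{|\alpha|\leq k-1}$ satisfies the Whitney compatibility conditions \eqref{derivadas de whitney}--\eqref{condicion whitney} of order $k-1$. Fixing $x,y\in D$ and setting $q(z)=T_{k-1}(x;z)-T_{k-1}(y;z)$, a polynomial of degree at most $k-1$, I would bound $q$ on the lens $V(x,y)=B(x,|x-y|)\cap B(y,|x-y|)$: every $z\in V(x,y)$ satisfies $|z-x|,|z-y|\leq|x-y|\leq 1/j$, so the defining inequality of $E_j$ gives $|q(z)|\leq |T_{k-1}(x;z)-f(z)|+|f(z)-T_{k-1}(y;z)|\leq 2j|x-y|^{k}$. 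Since $\mathcal{L}^{n}(V(x,y))=\rho\,|x-y|^{n}$ for a fixed geometric constant $\rho>0$, De Giorgi's Lemma \ref{De Giorgi} (with $r=|x-y|$ and the ball centred at $x$) yields, for every $|\alpha|\leq k-1$,
$$
|D^{\alpha}f(x)-D^{\alpha}T_{k-1}(y;x)|=|D^{\alpha}q(x)|\leq \frac{C}{|x-y|^{n+|\alpha|}}\int_{V(x,y)}|q|\leq 2C\rho\, j\,|x-y|^{k-|\alpha|},
$$
where I used $D^{\alpha}T_{k-1}(x;\cdot)\big|_{z=x}=D^{\alpha}f(x)$. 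Since $\sum_{|\beta|\leq k-1-|\alpha|}\frac{1}{\beta!}D^{\alpha+\beta}f(y)(x-y)^{\beta}=D^{\alpha}T_{k-1}(y;x)$, this is precisely the remainder estimate $|R_\alpha(x,y)|\leq M|x-y|^{k-|\alpha|}$ demanded by \eqref{condicion whitney}, while the pointwise bounds $|D^{\alpha}f|\leq M$ on $D$ hold by continuity of the derivatives on the compact cube. I expect this De Giorgi estimate to be the only genuinely nontrivial ingredient.

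Once the Whitney conditions hold on $D$, I would apply the Whitney Extension Theorem \ref{WET for functions with Lipschitz derivatives} coordinatewise (with smoothness order $k-1$) to obtain $g\in C^{k-1,1}(\R^n,\R^m)$ with $g=f$ and $D^{\alpha}g=D^{\alpha}f$ on $D$ for all $|\alpha|\leq k-1$. In particular $Dg=Df$ on $D$, so any $x\in D\cap C_f$ has $\operatorname{rank}Dg(x)=\operatorname{rank}Df(x)<m$ and thus lies in the critical set $C_g$ of $g$; hence $f(C_f\cap D)=g(C_f\cap D)\subseteq g(C_g)$. Because $k-1=n-m$, the map $g$ is exactly of the class $C^{k-1,1}(\R^n,\R^m)$ to which Bates's theorem \cite{Bates} applies, giving $\mathcal{L}^m(g(C_g))=0$ and therefore $\mathcal{L}^m(f(C_f\cap D))=0$. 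Summing over the countably many pairs $(j,Q)$ and invoking subadditivity of the outer measure $\mathcal{L}^m$ gives $\mathcal{L}^m(f(C_f))=0$. The same argument applies verbatim when $\R^n$ is replaced by an open set $\Omega$: one intersects the covering cubes with $\Omega$ and extends from $D\subseteq\Omega$, the critical-set inclusion and Bates's theorem being unaffected.
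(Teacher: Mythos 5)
Your proposal is correct and follows essentially the same route as the paper's Appendix argument: decompose $\R^n$ into the sets $E_j$, derive the Whitney compatibility estimates via the De Giorgi/Liu--Tai lemma, extend to a $C^{k-1,1}$ function by the Whitney Extension Theorem, and conclude with Bates's theorem. Your additional localization to cubes of diameter at most $1/j$ (so that the estimate $|x-y|\leq 1/j$ holds for all pairs in each piece) is a detail the paper's sketch glosses over, but it is exactly the right fix and does not change the method.
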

(In the special case $n=m$ and $k=1$, the above statement simply says that a Stepanoff function has the Morse-Sard property, a fact which is well known.)

An equivalent version of this result  was established in \cite{AFG} as a corollary to an abstract version of the Morse-Sard Theorem; note that the assumptions of the above result are equivalent to those of \cite[Theorem 1.1]{AFG} thanks to a result of Liu and Tai \cite[Theorem 2]{LiuTai}.

A referee of this paper pointed out that a very short proof of Theorem \ref{improvement of a theorem of AFG1} can be obtained by using some ideas of the proof of \cite[Theorem 1]{LiuTai}. We next offer the gist of his argument.

Under the standing hypothesis, one can write $$\R^n=\bigcup_{j=1}^{\infty}E_{j},$$
where
$$
E_{j}:=\left\{ x\in\R^n \, : \, \frac{|f(y)-T_{k-1}(x;y)|}{|y-x|^k}\leq j \textrm{ for all } y \textrm{ with } 0<|y-x|\leq\frac{1}{j}\right\}
$$
and $T_{k-1}(x; \cdot)$ is the Taylor polynomial of order $k-1$ of $f$, centered at $x$.
Each set $E_j$ is closed, and by following the lines of the proof of \cite[Theorem 1]{LiuTai} it can be shown that
$$
|\partial^{\alpha}f(y)-\partial^{\alpha}T_{k-1}(x; y)|\leq C j|y-x|^{k-|\alpha|}
$$
for all $x,y\in E_j$ with $|x-y|\leq 1/j$ and all multi-indices $\alpha$ of order $|\alpha|\leq k-1$. Then one can apply Theorem \ref{WET for functions with Lipschitz derivatives} (the $C^{k-1,1}$ version of the Whitney Extension Theorem) to find a function $g_{j}\in C^{k-1,1}(\R^n, \R^m)$ such that the restriction of $g_j$ to $E_j$ coincides with $f$, and the restriction of each partial derivative $\partial^{\alpha}g_{j}$ to $E_j$ coincides with $\partial^{\alpha}f$, for all multi-indices $\alpha$ of the order $|\alpha|\leq k-1$. This obviously implies (denoting the set of critical points of a function $\varphi$ by $C_{\varphi}$) that
$$
f(C_f)\subseteq\bigcup_{j=1}^{\infty}g_{j}(C_{g_{j}}).
$$
However, according to Bates's theorem \cite{Bates} the sets $g_{j}(C_{g_{j}})$ are of measure $0$ in $\R^m$. Therefore so is $f(C_f)$. \qed

\medskip

In fact, because the $C^{k-1,k}$ version of Whitney's extension theorem holds for not necessarily closed sets as well (see the remark after Theorem \ref{WET for functions with Lipschitz derivatives} above), the preceding argument can also be arranged to show that condition $(1)$ of Theorem \ref{improvement of a theorem of AFG1} can be dispensed with: namely, one also has the following.

\begin{theorem}\label{improvement of improvement of a theorem of AFG1}
Let $n\geq m$ be positive integers, $k:=n-m+1$ and let $f:\R^n\to\R^m$ be such that for every $x$ there exists a polynomial $P(x; \cdot)$ centered at $x$ such that
$$\limsup_{y\to x}\frac{|f(y)-P(x;y)|}{|y-x|^k}<\infty$$ for every $x\in\R^n$.
Then $\mathcal{L}^{m}\left(f(C_f) \right)=0$, where
$C_f$ is defined as the set $\{ x\in\R^n : \textrm{rank}\left(DP(x; \cdot)(x)\right) < m\}$.

\noindent The same statement holds true if $\R^n$ is replaced with an open subset of $\R^n$. 
\end{theorem}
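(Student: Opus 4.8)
The plan is to run the argument sketched above for Theorem \ref{improvement of a theorem of AFG1}, but with the Taylor polynomials $T_{k-1}(x;\cdot)$ (which there came from the assumed $C^{k-1}$ smoothness) replaced by the polynomials $P(x;\cdot)$ provided by the hypothesis, accepting that the resulting level sets need no longer be closed. First I would normalize: for each $x$ we may assume $P(x;\cdot)$ has degree at most $k-1$, since discarding its homogeneous terms of degree $\geq k$ changes neither the finiteness of the $\limsup$ nor, for $k\geq 2$, the linear part $DP(x;\cdot)(x)$, hence leaves $C_f$ untouched (the case $k=1$ is the classical statement that a Stepanoff function has the Morse-Sard property). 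Writing $f_\alpha(x):=\partial^\alpha P(x;\cdot)(x)$ for $|\alpha|\leq k-1$, so that the degree-one data recover $DP(x;\cdot)(x)$, I would decompose $\R^n=\bigcup_{j=1}^\infty E_j$, where
$$
E_j=\Bigl\{x\in\R^n : \tfrac{|f(y)-P(x;y)|}{|y-x|^k}\leq j \text{ for all } 0<|y-x|\leq\tfrac1j,\ |f_\alpha(x)|\leq j \text{ for } |\alpha|\leq k-1 \Bigr\}.
$$
On each $E_j$ the defining bounds force $f(x)=P(x;x)=f_0(x)$ (so $f$ is continuous there), but the only structural difference with the smooth case is that, $f$ and the coefficients $f_\alpha$ not being assumed continuous, the sets $E_j$ need not be closed.

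The second step is to establish, exactly as in the proof of Lemma \ref{Derivadas no nulas}, the Whitney-type estimate
$$
|f_\alpha(y)-\partial^\alpha P(x;\cdot)(y)|\leq C\,j\,|y-x|^{k-|\alpha|}\qquad (x,y\in E_j,\ |x-y|\leq 1/j,\ |\alpha|\leq k-1),
$$
with $C=C(n)$. To get it I would fix $x,y\in E_j$, use the two membership inequalities on the lens $V(x,y)=B(x,|x-y|)\cap B(y,|x-y|)$ to bound the difference polynomial $q=P(y;\cdot)-P(x;\cdot)$ by $2j|x-y|^k$ there, and then feed $q$ into De Giorgi's Lemma \ref{De Giorgi} (with $r=|x-y|$ and the fixed density of the lens) to control each $D^\alpha q(y)=f_\alpha(y)-\partial^\alpha P(x;\cdot)(y)$ by $Cj|x-y|^{k-|\alpha|}$. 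This is precisely the computation carried out to prove $(*)$ in Lemma \ref{Derivadas no nulas}, and since it uses the hypothesis only through the defining inequalities of $E_j$ it is insensitive to whether $f$ is smooth; rewriting it with the roles of $x$ and $y$ exchanged yields exactly the compatibility condition \eqref{derivadas de whitney} with $R_\alpha(x,y)=O(|x-y|^{(k-1)+1-|\alpha|})$.

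Finally I would invoke the version of Theorem \ref{WET for functions with Lipschitz derivatives} valid for arbitrary (not necessarily closed) sets, as in the remark following its statement: the estimate above lets one extend the field $\{f_\alpha\}_{|\alpha|\leq k-1}$ first to the closure $\overline{E_j}$ and then to all of $\R^n$, producing $g_j\in C^{k-1,1}(\R^n,\R^m)$ with $g_j=f$ and $D^\alpha g_j=f_\alpha$ on $E_j$. In particular $Dg_j(x)=DP(x;\cdot)(x)$ at every original point $x\in E_j$, so each $x\in C_f\cap E_j$ is a critical point of $g_j$ with $f(x)=g_j(x)\in g_j(C_{g_j})$; hence $f(C_f)=\bigcup_j f(C_f\cap E_j)\subseteq\bigcup_j g_j(C_{g_j})$, and Bates's theorem \cite{Bates} gives $\mathcal{L}^m(g_j(C_{g_j}))=0$ for every $j$, whence $\mathcal{L}^m(f(C_f))=0$. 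The main obstacle, and the only genuine departure from the proof of Theorem \ref{improvement of a theorem of AFG1}, is the loss of closedness of the $E_j$: one must ensure the Whitney data extend consistently to $\overline{E_j}$, which is exactly what the non-closed version of Theorem \ref{WET for functions with Lipschitz derivatives} guarantees, while noting that the critical-set inclusion $C_f\cap E_j\subseteq C_{g_j}$ only ever evaluates $g_j$ at original points of $E_j$, so nothing about $g_j$ on $\overline{E_j}\setminus E_j$ is needed.
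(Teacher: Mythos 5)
Your proposal is correct and follows the paper's own route for this theorem essentially verbatim: the paper obtains it by rerunning the Appendix argument for Theorem \ref{improvement of a theorem of AFG1} with the Taylor polynomials replaced by the hypothesized $P(x;\cdot)$, i.e.\ the decomposition into sets $E_j$, the De Giorgi--lens estimate of Lemma \ref{Derivadas no nulas} to produce the Whitney data, the non-closed-set version of Theorem \ref{WET for functions with Lipschitz derivatives}, and Bates's theorem. The one inaccuracy is your parenthetical claim that the defining bounds of $E_j$ force $f(x)=P(x;x)$ --- they only constrain $y\neq x$ and hence force $\lim_{y\to x}f(y)=P(x;x)$ --- but this is harmless, since a function possessing a limit at every point has at most countably many discontinuity points, so the exceptional set contributes only a countable (hence $\mathcal{L}^m$-null) subset of $f(C_f)$.
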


\section*{Acknowledgement}
We wish to thank the referees for many suggestions that greatly improved this work, and especially for drawing our attention to the paper \cite{LiuTai}.


\begin{thebibliography}{}

\bibitem{AFG}
D. Azagra, J. Ferrera, and J. G\'omez-Gil, {\em The Morse-Sard theorem revisited}, preprint, arXiv:1511.05822.

\bibitem{AbrahamRobbin}
R. Abraham, J. Robbin, {\em Transversal mappings and flows}. W. A. Benjamin, Inc., New York--Amsterdam, 1967.

\bibitem{Alberti}
G. Alberti, {\em Generalized N-property and Sard theorem for Sobolev maps}, Atti Accad. Naz. Lincei Cl. Sci. Fis. Mat. Natur. Rend. Lincei (9) Mat. Appl. 23 (2012), no. 4, 477--491.

\bibitem{BarbetDambrineDaniilidis}
L. Barbet, M. Dambrine, and A. Daniilidis, {\em The Morse-Sard theorem for Clarke critical values}. Adv. Math. 242 (2013), 217--227.

\bibitem{Bates}
S. M. Bates, {\em Toward a precise smoothness hypothesis in Sard's theorem}, Proc. Amer. Math. Soc. 117 (1993), no. 1, 279--283.

\bibitem{BoHaSt}
B. Bojarski, P. Haj\l asz and P. Strzelecki, {\em Sard's theorem for mappings in H\"older and Sobolev spaces}, Manuscripta Math. 118 (2005), 383--397.

\bibitem{BolteDaniilidisLewis}
J. Bolte, A. Daniilidis, and A. Lewis, {\em A nonsmooth Morse-Sard theorem for subanalytic functions.} J. Math. Anal. Appl. 321 (2006), no. 2, 729--740.

\bibitem{BourKoKris1}
J. Bourgain, M. V. Korobkov and J. Kristensen, {\em On the Morse-Sard property and level sets of Sobolev and BV functions}, Rev. Mat. Iberoam. 29 (2013), no. 1, 1--23.

\bibitem{BourKoKris2}
J. Bourgain, M. V. Korobkov and J. Kristensen, {\em On the Morse-Sard property and level sets of $W^{n,1}$ Sobolev functions on $\R^n$},
J. Reine Angew. Math. 700 (2015), 93--112.

\bibitem{Campanato}
S. Campanato, {\em Propriet\`a di una famiglia di spazi funzionali},
Ann. Scuola Norm. Sup. Pisa (3) 18 1964 137--160. 

\bibitem{ClarkeEtAl}
F.H. Clarke, Yu.S. Ledyaev, R.J. Stern, and P.R. Wolenski, {\em Nonsmooth Analysis and Control Theory.} Grad. Texts in Math. 178, Springer, 1998.

\bibitem{CIL}
M.G. Crandall, H. Ishii, P.-L. Lions, {\em User's guide to viscosity solutions of second order partial differential equations}, Bull. Amer. Math. Soc. 27 (1992) 1--67.

\bibitem{DePascale}
L. De Pascale, {\em The Morse-Sard theorem in Sobolev spaces}, Indiana Univ. Math. J. 50 (2001), 1371--1386.

\bibitem{Dorronsoro}
J.R. Dorronsoro, {\em Differentiability properties of functions with bounded variation}, Indiana Univ. Math. J. 38 (1989), no. 4, 1027--1045. 

\bibitem{Dubo}
A. Y. Dubovitski\v{\i}, {\em Structure of level sets for differentiable mappings of an n-dimensional cube into a k-dimensional cube} (Russian), Izv. Akad. Nauk SSSR Ser. Mat. 21 (1957), no. 3, 371--408.

\bibitem{EvansGariepy}
L.C. Evans, R.F. Gariepy, {\em Measure theory and fine properties of functions}. Studies in Advanced Mathematics. CRC Press, Boca Raton, FL, 1992.

\bibitem{Ferrera}
J. Ferrera, {\em An introduction to nonsmooth analysis}. Elsevier/Academic Press, Amsterdam, 2014

\bibitem{Figalli}
A. Figalli, {\em A simple proof of the Morse-Sard theorem in Sobolev spaces}. Proc. Amer. Math. Soc. 136 (2008), no. 10, 3675--3681. 

\bibitem{Glaeser}
G. Glaeser, {\em Etudes de quelques alg\`ebres tayloriennes}, J. d'Analyse 6 (1958), 1-124.

\bibitem{Hajlasz2}
P. Haj\l asz, {\em Whitney's example by way of Assouad's embedding}, Proc. Amer. Math. Soc. 131 (2003), no. 11, 3463--3467.

\bibitem{HajlaszZimmerman}
P. Haj\l asz, S. Zimmerman, {\em Dubovitskij-Sard theorem for Sobolev mappings}, preprint, arXiv:1506.00025.

\bibitem{KorobkovKristensen}
M.V. Korobkov, J. Kristensen, {\em On the Morse-Sard theorem for the sharp case of Sobolev mappings}, Indiana Univ. Math. J. 63 (2014), no. 6, 1703--1724.

\bibitem{Landis}
E.M. Landis, {\em On functions representable as the difference of two convex functions}, Doklady Akad. Nauk SSSR (N.S.) 80 (1951), 9--11.

\bibitem{LinLiu}
C.-L. Lin, F.-C. Liu, {\em Approximate differentiability according to Stepanoff-Whitney-Federer}, Indiana Univ. Math. J. 62 (2013), no. 3, 855--868.

\bibitem{LiuTai}
F.-C. Liu, W.-S. Tai, {\em Approximate Taylor polynomials and differentiation of functions},  Topol. Methods Nonlinear Anal. 3 (1994), no. 1, 189--196.

\bibitem{Malgrange}
B. Malgrange, {\em Ideals of differentiable functions.} Tata Institute of Fundamental Research Studies in Mathematics, No. 3. Oxford University Press, 1967.

\bibitem{Morse}
A. P. Morse, {\em The behavior of a function on its critical set}, Ann. of Math. 40 (1939), 62--70.

\bibitem{Norton}
A. Norton, {\em A critical set with nonnull image has large Hausdorff dimension}, Trans. Amer. Math. Soc. 296 (1986), 367--376.

\bibitem{Norton2}
A. Norton, {\em Functions not constant on fractal quasi-arcs of critical points}, Proc. Amer. Math. Soc. 106 (1989) no. 2, 397--405.

\bibitem{NortonZMS}
A. Norton, {\em The Zygmund Morse-Sard theorem},
J. Geom. Anal. 4 (1994), no. 3, 403--424. 

\bibitem{PavZaj}
D. Pavlica and L. Zaj\'i\v{c}ek, {\em Morse-Sard theorem for d.c. functions and mappings on $\R^2$}, Indiana Univ. Math. J. 55 (2006), no. 3, 1195--1207.

\bibitem{Putten}
R. van der Putten, {\em The Morse-Sard theorem in $W^{n,n}(\Omega)$: a simple proof}, Bull. Sci. Math. 136 (2012), no. 5, 477--483.

\bibitem{Rifford}
L. Rifford, {\em On Viscosity Solutions of Certain Hamilton-Jacobi
Equations: Regularity Results and Generalized Sard's Theorems}
Comm. Partial Diff. Eq. 33 (2008), 517--559.

\bibitem{Sard}
A. Sard, {\em The measure of the critical values of differentiable maps}, Bull. Amer. Math. Soc. 48 (1942), 883--890.

\bibitem{Stein}
E. Stein, {\em Singular integrals and differentiability properties of functions}. Princeton, University Press, 1970.

\bibitem{Whitney1934}
H. Whitney, {\em Analytic extensions of differentiable functions defined in closed sets}, Trans. Amer. Math. Soc. 36 (1934), 63--89.

\bibitem{Whitney}
H. Whitney, {\em A function not constant on a connected set of critical points}, Duke Math. J. 1 (1935), 514--517.

\bibitem{Yomdin}
Y. Yomdin, {\em The geometry of critical and near-critical values of differentiable mappings}. Math. Ann. 264 (1983), no. 4, 495--515.

\end{thebibliography}
\end{document}